\documentclass[11pt,a4paper,reqno]{amsart}
\usepackage{amsmath,amscd,amssymb,latexsym}
\usepackage{hyperref}
\usepackage{indentfirst}
\usepackage{mathtools}
\usepackage{enumitem}
\usepackage{float}
\usepackage{multirow}
\usepackage{makecell}
\usepackage{xypic}
\usepackage{todonotes}
\usepackage{algorithmic,algorithm}
\usepackage{kotex}

\usepackage[last]{changes}

\newtheorem{Thm}{Theorem}[section]
\newtheorem{Cor}[Thm]{Corollary}
\newtheorem{Lem}[Thm]{Lemma}

\theoremstyle{definition}
\newtheorem{Def}[Thm]{Definition}

\theoremstyle{remark}
\newtheorem{Rk}[Thm]{Remark}
\newtheorem{Ex}[Thm]{Example}
\newtheorem{Pb}[Thm]{Problem}

\def\N{{\mathbb N}}

\def\Z{{\mathbb Z}}

\newtheorem{theorem}{Theorem}[section]
\newtheorem{lemma}[theorem]{Lemma}

\theoremstyle{definition}
\newtheorem{definition}[theorem]{Definition}

\newtheorem{corollary}[theorem]{Corollary}
\newtheorem{proposition}[theorem]{Proposition}

\theoremstyle{remark}
\newtheorem{remark}[theorem]{Remark}

\numberwithin{equation}{section}

\begin{document}

\title{The Frobenius problem for Numerical Semigroups generated by binomial coefficients}


\author{WonTae Hwang}
\address{Department of Mathematics and Institute of Pure and Applied Mathematics, Jeonbuk National University, 567 Baekje-daero, Deokjin-gu, Jeonju-si, Jeollabuk-do, 54896, South Korea}
\curraddr{Department of Mathematics and Institute of Pure and Applied Mathematics, Jeonbuk National University, 567 Baekje-daero, Deokjin-gu, Jeonju-si, Jeollabuk-do, 54896, South Korea}
\email{hwangwon@jbnu.ac.kr}
\thanks{}

\author{Kyunghwan Song}
\address{Department of Mathematics, Jeju National University, 102 Jejudaehakro Jeju, 63243, Republic of Korea}
\curraddr{Department of Mathematics, Jeju National University, 102 Jejudaehakro Jeju, 63243, Republic of Korea}
\email{khsong@jejunu.ac.kr}
\thanks{}

\subjclass[2020]{Primary 11A67, 11A07, 11B65, 05A10, 05A17 }

\date{}

\dedicatory{}


\maketitle


\begin{abstract} 
The greatest integer that does not belong to a numerical semigroup $S$ is called the Frobenius number of $S$, and finding the Frobenius number is called the Frobenius problem.  In this paper,  we solve the Frobenius problem for the numerical semigroups generated by binomial coefficients. As applications, we provide some nontrivial identities among binomial coefficients, and we also connect the main results to the theory of $(s,s+1,s+p)$-core partitions of integers.
\end{abstract}

\maketitle \setcounter{section}{-1}
\section{Introduction}\label{sec_Introduction}
Let $\mathbb{N}$ be the set of nonnegative integers, and suppose that we are given a numerical semigroup $S \subseteq \mathbb{N}$ so that $\mathbb{N} \setminus S$ is finite. Then the Frobenius number $F(S)$ of $S$ is defined to be the largest integer in the finite set $\mathbb{N} \setminus S$.

 It is easy to see that the Frobenius number of $S$ is the greatest integer that is not of the form $\sum_{i=1}^n t_i a_i$ where $t_i \in \mathbb{N}$ and $a_i \in S$ for all $i$ such that $\gcd(a_1, a_2, \cdots, a_n) = 1$. This process is referred to as the Frobenius problem, and the studies related to it have many applications to other fields such as graph theory \cite{Heap (1965),Hujter (1987)} and computer science \cite{Raczunas1996}.

Given two relatively prime numbers $a, b$, the formula for the Frobenius number of the numerical semigroup generated by $a, b$ is expressed in a very easy form \cite{Sylvester1883}, whereas there is no known closed formula for the Frobenius number of numerical semigroups generated by three or more relatively prime integers. In fact, it is known that finding the Frobenius number for three or more relatively prime numbers is a NP-hard problem \cite{Ramirez1996}. Currently, there are only partial results \cite{Robles2012,Tripathi (2017)} or algorithmic methods to compute them \cite{Rodseth1978}.

As some special cases, for the numerical semigroups $S$ generated by three or more relatively prime integers, the research is still ongoing. There are various methods to find the Frobenius number $F(S)$ in this case, and one of the most frequently used methods to compute the Frobenius number is to use the Ap\'ery set of $S$, which is introduced in \cite{Marquez (2015),Rosales2009,Ramirez2009}. In fact, there are many results on the computation of the Frobenius numbers that use the Ap\'ery set \cite{Rosales2015,Rosales2016,Rosales2017}.

Furthermore, there is a generalization of the Frobenius problem, which is called the $p$-Frobenius problem. If $S_p$ is a set of integers that can be represented by more than $p$ ways of the nonnegative integer combinations of elements of a given numerical semigroup, then the $p$-Frobenius number is defined as the largest element of $\mathbb{N}\backslash S_p$, and accordingly, $p$-Ap\'ery set, $p$-genus, etc. can be defined. Note that $0$-Frobenius problem is the classical Frobenius problem. The $p$-Frobenius problem has been actively studied in recent years \cite{Komatsu2022,Komatsu2024,Komatsu2025}.

One of the main motivations for writing this paper is the results of \cite{Rosales2018}. In that paper, the authors solved the Frobenius problem for triangular, tetrahedral, and related numerical semigroups, where all the elements of a given numerical semigroup can be expressed in terms of 
binomial coefficients with fixed lower in common. In other words, the authors studied the numerical semigroups generated by the set
$$
\left\{ \binom{n+m-1}{m}, \cdots, \binom{n+2m-1}{m}\right\}
$$
for each pair of fixed positive integers $n$ and $m$. On the other hand, the research on the numerical semigroups generated by the binomial coefficients with fixed upper does not exist. Thus, we try to consider such semigroups, which we describe now in more details. 

Let $n$ be a positive integer, and consider the following set
$$
B_n = \left\{\binom{n}{1},\binom{n}{2}, \cdots, \binom{n}{n-1}\right\}.
$$
Ram \cite{Ram1909} proved that 
$$\gcd(B_n) = \begin{cases}
	p & \text{when }n = p^m\text{ is a prime power} \\
	1 & \text{otherwise}
\end{cases}$$
and thus,
$$
S(B_n) = \left\langle\left\{\binom{n}{1},\binom{n}{2}, \cdots, \binom{n}{n-1}\right\}\right\rangle
$$
is a numerical semigroup when $n$ is not a prime power, and if $n = p^m$ is a prime power, then
$$
S(B_n) = \left\langle\left\{\frac{1}{p}\binom{n}{1},\frac{1}{p}\binom{n}{2}, \cdots, \frac{1}{p}\binom{n}{n-1}\right\}\right\rangle
$$ 
is a numerical semigroup. In this paper, we solve the Frobenius problem for the numerical semigroup $S(B_n)$ for any integer $n \geq 1.$ Since $\binom{n}{k} = \binom{n}{n-k}$ for each $1 \leq k \leq n$, it is sufficient to describe $S(B_n)$ using only the first $\lfloor \frac{n}{2} \rfloor$ terms where $\lfloor \cdot \rfloor$ denotes the Gauss function. In this regard, some of the main results are summarized in the following theorem. 
\begin{theorem}\label{main thm summary}
    \begin{enumerate}
        \item Let $n = p_1^{k_1}\cdots p_t^{k_t}$ where $p_1< \cdots < p_t$ (with $t \geq 2$) are primes, and $k_i$ are nonnegative integers for each $1 \leq i \leq t$. Then 
    \begin{enumerate}[label=(\alph*)]
        \item The Ap$\acute{e}$ry set of $S(B_n)$ is given by
	$$
		\text{Ap}(S(B_{n}),n) = \Big\{\sum_{i=1}^{t}\sum_{j=1}^{k_i} c_{i,j}\binom{n}{p_i^{j}} ~|~ c_{i,j} \in \mathbb{Z}, 0\leq c_{i,j} \leq p_i-1\Big\}.
	$$
        \item The Frobenius number of $S(B_{n})$ is equal to
	$$
	F(S(B_{n})) =  \sum_{i=1}^{t} \sum_{j=1}^{k_i} (p_i-1)\binom{n}{p_i^j} - n.
	$$
    \end{enumerate}
        \item Let $n = p^m$ where $p$ is a prime and $m \geq 1$ is an integer. Then we have
    \begin{enumerate}[label=(\alph*)]
        \item The Ap$\acute{e}$ry set of $S(B_n)$ is given by
	$$
	\text{Ap}(S(B_{n}),p^{m-1}) = \Big\{\frac{1}{p}\sum_{i=1}^{m-1} c_i\binom{n}{p^i} ~|~ c_{i} \in \mathbb{Z}, 0\leq c_{i} \leq p-1\Big\}.
	$$
    \item The Frobenius number of $S(B_{n})$ is equal to
	$$
	F(S(B_{n})) =  \frac{p-1}{p}\sum_{i=1}^{m-1} \binom{p^m}{p^i} - p^{m-1}.
	$$
    \end{enumerate}
    \end{enumerate}
\end{theorem}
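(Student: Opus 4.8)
\emph{Proof strategy.} The plan is to reduce both statements to the computation of an Ap\'ery set and then apply the classical identity $F(S)=\max\mathrm{Ap}(S,a)-a$ (valid for any $a\in S$), recalling that $\mathrm{Ap}(S,a)=\{\,s\in S:s-a\notin S\,\}$ has exactly $a$ elements, one in each residue class modulo $a$. Granting part (a) of each item, part (b) is immediate: all generators are positive, so the largest element of the claimed Ap\'ery set is the one with every coefficient $c_{i,j}$ (resp.\ $c_i$) equal to $p_i-1$ (resp.\ $p-1$), and subtracting $n$ (resp.\ $p^{m-1}=\tfrac1p\binom{p^m}{1}$) gives the stated formula. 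I would run the composite and prime-power cases in parallel; the latter differs only by the global factor $1/p$ and by taking the Ap\'ery set with respect to the smallest generator $p^{m-1}$ rather than $\binom n1=n$.

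For part (a), let $W$ be the set on the right-hand side; that $W\subseteq S(B_n)$ is clear, since each $\binom n{p_i^j}$ (resp.\ $\tfrac1p\binom{p^m}{p^i}$) is a generator. The key input is Kummer's theorem on $p$-adic valuations of binomial coefficients, which gives $v_{p_i}\!\binom n{p_i^j}=k_i-j$ for $1\le j\le k_i$ and $v_{p_i}\!\binom n{p_\ell^{\,j}}\ge k_i$ for $\ell\ne i$ (and $v_p\!\bigl(\tfrac1p\binom{p^m}{p^i}\bigr)=m-1-i$ when $n=p^m$). Combined with the Chinese Remainder Theorem, the parametrizing map $(c_{i,j})\mapsto\sum_{i,j}c_{i,j}\binom n{p_i^j}\bmod n$ decouples over the primes $p_i$: modulo $p_i^{k_i}$ only the terms indexed by powers of $p_i$ survive, and writing $\binom n{p_i^j}=p_i^{k_i-j}u_{i,j}$ with $u_{i,j}$ a $p_i$-adic unit, the tuple $(c_{i,1},\dots,c_{i,k_i})$ is recovered from the residue by a descending induction on $j$ (reduce modulo $p_i$, read off $c_{i,k_i}$, subtract, divide by $p_i$, repeat). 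Hence this map is injective; since its domain has $\prod_i p_i^{k_i}=n$ elements (resp.\ $p^{m-1}$), $W$ is automatically a complete residue system of the correct size, and it remains only to show that each $w\in W$ is the least element of $S(B_n)$ in its class, equivalently $w-n\notin S(B_n)$ (resp.\ $w-p^{m-1}\notin S(B_n)$).

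This last point is the heart of the matter. Suppose not: then $w=bn+\sum_{i,j}a_{i,j}\binom n{p_i^j}$ for some nonnegative $a_{i,j}$ and some $b\ge1$, after a reduction lemma allowing one to restrict to the generators $n$ and $\binom n{p_i^j}$, $1\le j\le k_i$ (this lemma one proves from elementary divisibility, e.g.\ $n/\gcd(n,k)\mid\binom nk$, together with the valuation bounds). Comparing $w=bn+\sum a_{i,j}\binom n{p_i^j}$ with $w=\sum c_{i,j}\binom n{p_i^j}$ modulo $p_i$ for each $i$, only the term $\binom n{p_i^{k_i}}$ --- the unique special binomial prime to $p_i$ --- survives, forcing $a_{i,k_i}\equiv c_{i,k_i}\pmod{p_i}$ and hence $a_{i,k_i}\ge c_{i,k_i}$. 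When $n$ is squarefree this already gives $a_{i,j}\ge c_{i,j}$ for every $i,j$, so $w=bn+\sum a_{i,j}\binom n{p_i^j}\ge n+\sum c_{i,j}\binom n{p_i^j}=n+w$, absurd. In general one iterates: having pinned $a_{i,k_i}$, one works modulo higher powers of $p_i$ and, using the size hierarchy $n<\binom n{p_i}<\binom n{p_i^2}<\cdots$ together with the bound $c_{i,j}-a_{i,j}\le p_i-1$, peels off $a_{i,k_i-1},a_{i,k_i-2},\dots$ in turn until the same inequality $w\ge n+w$ is forced.

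The main obstacle is exactly this peeling when $n$ is neither a prime power nor squarefree, e.g.\ $n=12$. For $n=p^m$ the binomial ratios $\binom{p^m}{p^{j}}/\binom{p^m}{p^{j-1}}$ grow fast enough that one special binomial outweighs all smaller ones combined, so the peeling goes through by a crude estimate and item (2) drops out cleanly; for squarefree $n$ the modular step alone suffices. In the remaining case a single block may fail to be dominated by its largest binomial, and one must instead exploit the global identity $\sum_{i,j}(c_{i,j}-a_{i,j})\binom n{p_i^j}=bn>0$: the nonpositivity of the top differences $c_{i,k_i}-a_{i,k_i}$ forces the lower ones to supply a positive quantity that, once the remaining congruences are imposed, they are too small to provide. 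Carrying this bookkeeping out uniformly across all residue classes and all shapes of $n$ --- in particular tracking which prime powers are largest --- is where I expect the real work to lie; the prime-power statement of item (2) is then the clean special case.
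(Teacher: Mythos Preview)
Your overall plan---reduce everything to the Ap\'ery set and then read off the Frobenius number via $F(S)=\max\mathrm{Ap}(S,a)-a$---is exactly what the paper does. The gap is in how you pin down the Ap\'ery set itself, and the paper closes it with a stronger arithmetic input than the one you propose.

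You rely on Kummer's theorem, which gives only the $p_i$-adic \emph{valuations} $v_{p_i}\binom{n}{p_i^j}=k_i-j$. The paper instead proves (its Lemma~\ref{lem:modn}, via a congruence of Sun--Tauraso) the exact residue
\[
\binom{n}{p_i^{\,j}}\equiv \frac{n}{p_i^{\,j}}\pmod n.
\]
This is strictly more than Kummer, and it immediately yields the telescopic relations
\[
p_i\binom{n}{p_i^{\,j}}=\binom{n}{p_i^{\,j-1}}+(\text{positive multiple of }n)\qquad\Bigl(j\ge1,\ \tbinom{n}{p_i^{0}}:=n\Bigr),
\]
since both sides are congruent modulo $n$ and the left side is larger. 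With these in hand, any representation $bn+\sum a_{i,j}\binom{n}{p_i^j}$ with some $a_{i,j}\ge p_i$ can be rewritten so that all $0\le a_{i,j}\le p_i-1$ while $b$ only \emph{increases}; your injectivity step then forces the reduced tuple to equal $(c_{i,j})$, so every element of $S(B_n)$ in the class of $w$ is $\ge w$. In other words, the ``peeling'' you flag as the main obstacle vanishes once the exact congruence is available: the semigroup is telescopic (the paper records this as Corollary~\ref{cor:tele1}), and the product form of the Ap\'ery set is automatic. Your valuation-only approach does not see the compatibility between the unit parts $u_{i,j}$ across different $j$, which is exactly what drives the telescopic relation and what makes your general-case bookkeeping hard.

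A smaller point: your ``reduction lemma'' (that every $\binom nm$ lies in the subsemigroup generated by $n$ and the $\binom{n}{p_i^j}$) does not follow from divisibility alone---knowing $n/\gcd(n,m)\mid\binom nm$ places $\binom nm$ in the \emph{group} generated, not the semigroup. The paper handles this (Theorem~\ref{thm:general}) by a size argument: after dividing by the common gcd $c$ it invokes the crude Frobenius bound $F(x_1,\dots,x_r)<(x_{\min}-1)(x_{\max}-1)$ to check that $\tfrac1c\binom nm$ already exceeds the Frobenius number of $\tfrac1c\binom n1,\tfrac1c\binom n{p_1^{s_1}},\dots,\tfrac1c\binom n{p_t^{s_t}}$, hence is a nonnegative combination.
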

\begin{remark}\label{Erdos rmk}
Theorem \ref{main thm summary}-(1)-(b) solves a problem of Erdos \cite{Erdos 435}. 
\end{remark}
To prove the above theorem, we first compute the Ap$\acute{e}$ry set for the numerical semigroup $S(B_n)$ using a precise description of $S(B_n)$, and then we obtain the Frobenius number for $S(B_n)$ with the help of a well-known relation between the Ap$\acute{e}$ry set and the Frobenius number of a numerical semigroup. For a detailed proof of Theorem \ref{main thm summary}, see Lemma \ref{lem:modn} and Theorems \ref{thm:genemb} and \ref{thm:power} below.

 Note that, in \cite{Rosales2018}, the authors study the numerical semigroups generated by binomial coefficients whose lowers are fixed. On the other hand, we fix the total elements, or the uppers of the binomial coefficients.

This paper is organized as follows: In Section \ref{sec:pre}, we introduce some known definitions and properties that are related to the Frobenius problem of a given numerical semigroup. In Section \ref{sec:bin}, we give some properties of the binomial coefficients with a fixed upper. In Section \ref{sec:main}, we begin with giving a set of binomial coefficients whose nonnegative integer combination is equal to $\binom{n}{m}$ for each $n \geq m \geq 1$. Then, we find a minimal system of generators of $S(B_n)$ and obtain the embedding dimension of $S(B_n)$. By combining these results and the properties obtained in Section \ref{sec:bin}, we provide the Frobenius number, the Ap$\acute{e}$ry set, the genus, and the set of Pseudo-Frobenius number of $S(B_n)$. In Section \ref{sec:appl}, we introduce two applications of our main results, the former of which is about some identities among binomial coefficients with nonnegative integer coefficients, and the latter of which is about the relationship between the main results and $(s,s+1,s+p)$-core partitions.

\section{Preliminaries}\label{sec:pre}
Let $\mathbb{N}$ be the set of nonnegative integers. To begin with, we introduce the notion of a numerical semigroup and a submonoid generated by a nonempty subset of $\mathbb{N}$.
\begin{definition}\label{num semi def} 
	A subset $S$ of $\N$ is called a \emph{numerical semigroup} if $S$ is closed under addition, $0 \in S,$ and the complement $\N \setminus S$ is finite.
\end{definition}
By definition, it is clear that $\mathbb{N}$ is a numerical semigroup.
\begin{definition} \label{def_submonoid}
	Given a nonempty subset $A$ of a numerical semigroup $\mathbb{N}$, we define \emph{the submonoid $\langle A \rangle$  of $(\mathbb{N}, +)$ generated by $A$} as
	\begin{displaymath}
		\langle A \rangle = \{\lambda_1 a_1 + \cdots + \lambda_n a_n ~|~ n \in \mathbb{N}\texttt{\char`\\}\{0\}, a_i \in A, \lambda_i \in \mathbb{N}
	\end{displaymath}
	\textrm{ for all } $i \in \{1,\cdots,n\}\}$.
\end{definition}

In addition, we introduce several theorems and definitions that are related to numerical semigroups and submonoids generated by a nonempty subset of $\mathbb{N}$. The following concepts and facts are already known and the first fact gives a criterion for the submonoid $\langle A \rangle$ generated by a nonempty subset $A \subseteq \mathbb{N}$ to be a numerical subgroup.
\begin{theorem} (\cite{Rosales2015,Rosales2009}).
	Let $\langle A \rangle$ be the submonoid of $(\mathbb{N},+)$ generated by a nonempty subset $A \subseteq \mathbb{N}$. Then $\langle A \rangle$ is a numerical semigroup if and only if $\gcd(A) = 1$.
\end{theorem}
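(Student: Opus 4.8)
The plan is to prove the two implications separately, establishing the forward direction by contraposition. For the implication ``$\langle A \rangle$ a numerical semigroup $\Rightarrow \gcd(A)=1$'', suppose instead that $d := \gcd(A) > 1$. Every element of $\langle A \rangle$ is a nonnegative integer combination of elements of $A$, hence divisible by $d$; consequently no positive integer congruent to $1$ modulo $d$ belongs to $\langle A \rangle$. Since there are infinitely many such integers, $\mathbb{N} \setminus \langle A \rangle$ is infinite, so $\langle A \rangle$ is not a numerical semigroup.

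For the converse, assume $\gcd(A) = 1$. I would first reduce to a finite subset: the greatest common divisors of the finite subsets of $A$ form a nonempty set of positive integers closed under taking gcd's, so there is a finite subset $\{a_1, \dots, a_k\} \subseteq A$ with $\gcd(a_1, \dots, a_k) = \gcd(A) = 1$. By B\'ezout's identity there are $z_1, \dots, z_k \in \mathbb{Z}$ with $\sum_{i=1}^{k} z_i a_i = 1$. Grouping the terms with $z_i \geq 0$ and those with $z_i < 0$ produces two elements $P, N \in \langle A \rangle$ satisfying $P = N + 1$.

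The main step is then a standard covering argument. If $N = 0$ then $1 = P \in \langle A \rangle$, so $\langle A \rangle = \mathbb{N}$ and we are done. If $N \geq 1$, take any integer $m \geq N(N-1)$ and write $m = qN + r$ with $0 \leq r \leq N-1$; then $q = \lfloor m/N \rfloor \geq N - 1 \geq r$, and the identity $m = qN + r = (q-r)N + r(N+1) = (q-r)N + rP$ exhibits $m$ as a nonnegative integer combination of $N$ and $P$, hence $m \in \langle A \rangle$. Therefore $\mathbb{N} \setminus \langle A \rangle \subseteq \{0, 1, \dots, N(N-1)-1\}$ is finite. Since $\langle A \rangle$ is in any case a submonoid of $(\mathbb{N},+)$ (it is closed under addition and contains $0$, obtained by taking all coefficients equal to $0$), it is a numerical semigroup.

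The only genuinely delicate points are the passage from the possibly infinite set $A$ to a finite subset carrying the same gcd, and the bookkeeping in the degenerate case $N = 0$; everything else is the classical Sylvester/Chicken-McNugget type estimate, which I would not expect to cause trouble.
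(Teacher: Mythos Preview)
Your proof is correct. The paper does not actually prove this theorem; it merely states it and cites the references \cite{Rosales2015,Rosales2009} for a proof, so there is no ``paper's own proof'' to compare against. Your argument is the standard one: the forward direction is immediate since a common divisor $d>1$ of $A$ forces $\langle A\rangle\subseteq d\mathbb{N}$, and for the converse you correctly pass to a finite subset with $\gcd=1$, invoke B\'ezout to obtain consecutive integers $N,\,N+1\in\langle A\rangle$, and then run the classical two-generator bound to show every integer $\geq N(N-1)$ lies in $\langle A\rangle$. The edge cases (reduction to a finite subset, the degenerate case $N=0$) are handled cleanly.
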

\begin{definition} 
Let $S$ be a numerical semigroup.
\vskip 0.1in
(1) If $S=\langle A \rangle$ for some nonempty subset $A \subseteq \mathbb{N},$ then we say that $A$ is a {\em system of generators of} $S$. 
\vskip 0.1in
(2) If furthermore, we have $S \ne \langle X \rangle $ for any $X \subsetneq A,$ then we say that $A$ is a {\em minimal system of generators of} $S$.
\end{definition}
In terms of the existence of a miminal system of generators of a given numerical semigroup, one has the following interesting result.
\begin{theorem}\label{ros2009} (\cite{Rosales2009}).
	Every numerical semigroup admits a finite and unique minimal system of generators.
\end{theorem}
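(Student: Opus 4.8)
The plan is to write down an explicit candidate for the minimal system of generators --- the set of \emph{irreducible} nonzero elements of $S$ --- and then check that it is finite, that it generates $S$, and that it is forced to lie inside every generating set. Concretely, put $S^{*} = S \setminus \{0\}$ and let
$$
A = S^{*} \setminus (S^{*} + S^{*}),
$$
so that $A$ consists of the $s \in S^{*}$ that cannot be written as $a + b$ with $a, b \in S^{*}$. The theorem will follow from three claims: (i) $\langle A \rangle = S$; (ii) $A$ is finite; (iii) $A \subseteq B$ for every system of generators $B$ of $S$.

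For (i) I would argue by strong induction on $s \in S$ ordered by magnitude: $0$ is the empty combination, and a nonzero $s$ is either in $A$ (hence in $\langle A \rangle$) or equals $a+b$ with $a, b \in S^{*}$, in which case $a, b < s$ and the inductive hypothesis puts $a, b$ --- hence $s$ --- in $\langle A \rangle$. For (ii) I would invoke the hypothesis that $\mathbb{N}\setminus S$ is finite: choose $f$ so that every integer $> f$ lies in $S$, set $m = \min S^{*}$, and note that any $s \in S$ with $s > f + m$ has $s - m > f$, so $s - m \in S^{*}$ and $s = m + (s-m)$ is reducible; therefore $A \subseteq \{1, \dots, f+m\}$, which is finite.

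For (iii), let $S = \langle B \rangle$ and $a \in A$, and write $a = \sum_{i} \lambda_{i} b_{i}$ with the $b_{i} \in B$ distinct, $\lambda_{i} \in \mathbb{N}$, not all zero. If $\sum_{i}\lambda_{i} \geq 2$, then peeling off one copy of a generator gives $a = b_{j} + (a - b_{j})$ with both summands in $S^{*}$, contradicting $a \in A$; hence $\sum_{i}\lambda_{i} = 1$ and $a = b_{j} \in B$. Combining the claims: $A$ is a finite generating set by (i) and (ii); by (iii) every generating set contains $A$, so no proper subset of $A$ generates $S$ and $A$ is a minimal system of generators, while any minimal system of generators $B$ contains $A$ and, being minimal, must equal $A$ --- which gives uniqueness.

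The one step that requires care is (iii): the phrase ``peeling off one copy of a generator'' must be arranged so that both pieces are genuinely nonzero elements of $S^{*}$, which is exactly the place where a single generator occurring with multiplicity $\geq 2$ and two distinct generators each occurring once get treated on the same footing, and it is also where one uses that $0 \notin A$ and that the representation defining membership in $\langle B \rangle$ is required to be nontrivial. The remaining inputs --- well-ordering of $\mathbb{N}$ for (i) and finiteness of $\mathbb{N}\setminus S$ for (ii) --- are immediate.
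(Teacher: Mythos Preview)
The paper does not prove this theorem; it merely quotes it from \cite{Rosales2009} as a preliminary fact. Your argument is the standard one found in that reference: define the irreducible elements $A = S^{*}\setminus(S^{*}+S^{*})$, show by strong induction that they generate, bound them above by $m+F(S)$ to get finiteness, and show they sit inside every generating set. All three claims are correct as stated.

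One small point worth making explicit in claim~(iii): the paper's Definition of a system of generators allows $0\in B$, so before ``peeling off'' you should first discard any zero terms from the representation $a=\sum_i\lambda_ib_i$; this is harmless since $a\neq 0$ forces at least one nonzero $b_i$ to survive, and then your argument goes through verbatim. With that cosmetic adjustment the proof is complete and matches the textbook treatment.
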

In light of Theorem \ref{ros2009}, we can speak about the following notion.
\begin{definition} 
Let $S$ be a numerical semigroup. Then the cardinality of the minimal system of generators of $S$ is called the {\em embedding dimension} of $S$ and is denoted by $e(S)$.
\end{definition}
Now, the last condition in Definition \ref{num semi def} gives rise to the following.
\begin{definition}
Let $S$ be a numerical semigroup. Then the cardinality of $\mathbb{N}\texttt{\char`\\}S$ is called the {\em genus of} $S$ and is denoted by $g(S)$.
\end{definition}
The relation among the Frobenius number, genus, and Ap\'{e}ry set of a numerical semigroup is provided in the following lemma.
\begin{lemma} \emph{(\cite{Rosales2009, Selmer1977})}. \label{lem_F_g}
	Let $S$ be a numerical semigroup and let $x \in S\texttt{\char`\\}\{0\}$. Then we have
    \vskip 0.1in
    (a) $F(S) = \max(\text{Ap}(S,x)) - x.$
    \vskip 0.1in
    (b) $g(S) = \frac{1}{x} (\sum_{w \in \text{Ap}(S,x)} w) - \frac{x-1}{2}.$
\end{lemma}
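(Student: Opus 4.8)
The plan is to argue directly from the definition of the Apéry set, $\text{Ap}(S,x)=\{s\in S : s-x\notin S\}$, using the hypothesis $x\in S$ to control membership in $S$ one residue class modulo $x$ at a time. First I would fix a residue $r\in\{0,1,\dots,x-1\}$ and observe that $S\cap(r+x\Z)$ is nonempty, since $\N\setminus S$ is finite and hence all sufficiently large integers in class $r$ lie in $S$. Letting $w_r$ denote the least element of $S$ in that class, closure of $S$ under addition together with $x\in S$ gives $w_r+kx\in S$ for every $k\ge 0$, and minimality of $w_r$ shows these exhaust $S\cap(r+x\Z)$. Consequently $\text{Ap}(S,x)=\{w_0,w_1,\dots,w_{x-1}\}$ contains exactly one representative of each class, with $w_0=0$ because $0\in S$.

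For part (a): an integer $n\ge 0$ with $n\equiv r\pmod x$ fails to lie in $S$ if and only if $n<w_r$, so the largest non-element in class $r$ is $w_r-x$. Taking the maximum over all $r$, which is attained at some $r\ne 0$ since $w_r\ge r\ge 1>0=w_0$, yields $F(S)=\max_{0\le r\le x-1}(w_r-x)=\max(\text{Ap}(S,x))-x$.

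For part (b): writing $w_r=r+m_r x$ with $m_r\in\N$, the non-elements of $S$ congruent to $r$ are precisely $r,\,r+x,\,\dots,\,r+(m_r-1)x$, so class $r$ contributes $m_r$ to the genus. Summing over $r$ gives $g(S)=\sum_{r=0}^{x-1}m_r=\frac{1}{x}\sum_{r=0}^{x-1}(w_r-r)=\frac{1}{x}\sum_{w\in\text{Ap}(S,x)}w-\frac{1}{x}\cdot\frac{x(x-1)}{2}$, which is the asserted formula.

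The only substantive point is the structural claim in the first step, that within each fixed residue class modulo $x$ membership in $S$ is governed by the single threshold $w_r$; this is exactly where the hypothesis $x\in S$ is used, and everything afterward is routine bookkeeping with arithmetic progressions. Since the result is classical (due to Selmer, and recorded in the cited work of Rosales and García-Sánchez), I would present the argument concisely rather than in full detail.
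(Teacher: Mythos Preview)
Your argument is correct and is precisely the standard proof of this classical result. Note, however, that the paper does not supply its own proof of this lemma: it is recorded in Section~\ref{sec:pre} as a preliminary fact with citations to \cite{Rosales2009,Selmer1977}, and no argument is given. Your write-up therefore already goes beyond what the paper does here, and the approach you outline---showing that $\text{Ap}(S,x)$ consists of the minimal elements $w_r$ of $S$ in each residue class modulo $x$, and then reading off $F(S)$ and $g(S)$ class by class---is exactly the one found in those references.
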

Our last concepts related to a numerical semigroup are given below.
\begin{definition} \label{def_pseudo}
Let $S$ be a numerical semigroup. Then
\vskip 0.1in
(1) An integer $x$ is called a {\em pseudo-Frobenius number} if $x \not\in S$ and $x + s \in S$ for all $s \in S\texttt{\char`\\}\{0\}$. 
\vskip 0.1in
(2) The set of pseudo-Frobenius numbers of $S$ is denoted by $PF(S)$. 
\vskip 0.1in
(3) The cardinality of the set $PF(S)$ is called the {\em type} of $S$, and is denoted by $t(S)$.
\end{definition}
\begin{definition}
    Let $S$ be a numerical semigroup, $a,b \in \text{Ap}(S,x)$, and let $\leq_S$ be a relation such that $a\leq_S b$ if and only if $b-a \in S$. Then there are maximal elements of the set $\text{Ap}(S,x)$ such that 
    \begin{align*}
        & maximals_{\leq S}(\text{Ap} (S, x)) \\
        & = \{w \in \text{Ap}(S, x) ~|~ w' - w \not\in \text{Ap} (S, x) \backslash \{0\}\text{ for all }w' \in \text{Ap} (S, x)\}.
    \end{align*}
\end{definition}
\section{Some identities related to modular congruences for binomial coefficients}\label{sec:bin}
Let $p$ be a prime. In this subsection, the $p$-adic valuation for $\mathbb{N}$ is defined as the function $ \nu_p \colon \mathbb{N} \rightarrow \mathbb{N} \cup \{0\}$
$$
\nu_p(x) = \max\{ \nu \in \mathbb{N} \cup \{0\}: p^{\nu} \mid x\}.
$$
The following congruence which involves certain binomial coefficients and moduli (that is given in terms of the $p$-adic valuation) is useful.
\begin{lemma}\cite{Sun2011}
	Let $p$ be a prime and let $a, m, n \in \mathbb{N}$ with $m \geq n \geq 0$. Then we have
	\begin{equation}\label{eq:modp}
		\frac{\binom{p^am}{p^an}}{\binom{m}{n}} \equiv 1 + [p=2]pn(m-n)\pmod{p^{2+v_p(n)}}
	\end{equation}
	where $[A] = \begin{cases}
		1 &\text{ if }A\text{ is true},\\
		0 &\text{ if }A\text{ is false}.
	\end{cases}$
\end{lemma}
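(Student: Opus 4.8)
The plan is to reduce to the case $a=1$ by a telescoping argument and then to settle that case by rewriting the ratio as an explicit finite product of $p$-adic units and expanding it while tracking $p$-adic valuations. For the reduction I would use
$$\frac{\binom{p^{a}m}{p^{a}n}}{\binom{m}{n}} \;=\; \prod_{b=1}^{a}\ \frac{\binom{p^{b}m}{p^{b}n}}{\binom{p^{b-1}m}{p^{b-1}n}},$$
whose $b$-th factor is the ``$a=1$ ratio'' with $(m,n)$ replaced by $(p^{b-1}m,p^{b-1}n)$, for which $v_p(p^{b-1}n)=(b-1)+v_p(n)$. Granting the case $a=1$, the $b$-th factor is $\equiv 1\pmod{p^{\,2+(b-1)+v_p(n)}}$ when $p$ is odd and $\equiv 1+2\,p^{2(b-1)}n(m-n)\pmod{2^{\,2+(b-1)+v_2(n)}}$ when $p=2$; for $b\ge 2$ this is $\equiv 1 \pmod{p^{\,2+v_p(n)}}$ in either case (for $p=2$ because $v_2\!\big(2p^{2(b-1)}n(m-n)\big)\ge 2(b-1)+1+v_2(n)\ge 3+v_2(n)$). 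Hence only the $b=1$ factor survives modulo $p^{\,2+v_p(n)}$, and it gives the stated formula.

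For $a=1$ we may assume $m>n\ge1$, the cases $n=0$ and $m=n$ being trivial. Separating the factors of $(pm)!$, $(pn)!$, $(p(m-n))!$ according to divisibility by $p$ and reindexing yields the identity
$$\frac{\binom{pm}{pn}}{\binom{m}{n}} \;=\; \prod_{k=1}^{m-n}\ \prod_{i=1}^{p-1}\Bigl(1+\frac{pn}{p(k-1)+i}\Bigr),$$
in which each $p(k-1)+i$ with $1\le i\le p-1$ is a unit in $\mathbb Z_{(p)}$. In the expansion of this product, a term formed by choosing $j$ of the summands $\tfrac{pn}{p(k-1)+i}$ has $p$-adic valuation at least $j(1+v_p(n))\ge 2+v_p(n)$ once $j\ge2$, so
$$\frac{\binom{pm}{pn}}{\binom{m}{n}} \;\equiv\; 1+pn\sum_{k=1}^{m-n}\sum_{i=1}^{p-1}\frac{1}{p(k-1)+i}\pmod{p^{\,2+v_p(n)}}.$$
Writing $\Sigma$ for the double sum and using $v_p(pn)=1+v_p(n)$, the residue of $pn\,\Sigma$ modulo $p^{\,2+v_p(n)}$ depends only on $\Sigma$ modulo $p$, and $\Sigma\equiv(m-n)\sum_{i=1}^{p-1}i^{-1}\pmod p$. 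For odd $p$ the pairing $i\leftrightarrow p-i$ gives $\sum_{i=1}^{p-1}i^{-1}\equiv0\pmod p$, the correction vanishes, and the ratio is $\equiv1$. For $p=2$ the single summand gives $\sum_{i=1}^{1}i^{-1}=1$, so $\Sigma\equiv m-n\pmod 2$ and $pn\,\Sigma=2n\,\Sigma\equiv2n(m-n)\pmod{2^{\,2+v_2(n)}}$, which is exactly $[p=2]\,pn(m-n)$.

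The routine ingredients are the product identity (bookkeeping with $(pN)!=p^{N}N!\prod_{1\le j\le pN,\,p\nmid j}j$ and the index ranges) and the elementary fact $\sum_{i=1}^{p-1}i^{-1}\equiv0\pmod p$ for odd $p$. The delicate point — and the place where the dichotomy $[p=2]$ is forced — is the valuation bookkeeping in the last step: one must check that all degree-$\ge2$ terms (and all $b\ge2$ factors in the telescoping) are killed by the modulus $p^{\,2+v_p(n)}$, and, for $p=2$, that the surviving linear term is precisely $2n(m-n)$ and not something off by a power of $2$. I expect this to be the main obstacle, though it is careful valuation tracking rather than a conceptual difficulty.
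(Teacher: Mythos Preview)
The paper does not prove this lemma; it is quoted verbatim from Sun--Tauraso \cite{Sun2011} and used as a black box to derive Lemma~\ref{lem:modn}. So there is no ``paper's own proof'' to compare against.

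Your argument is correct. The product identity
\[
\frac{\binom{pm}{pn}}{\binom{m}{n}}=\prod_{k=1}^{m-n}\prod_{i=1}^{p-1}\Bigl(1+\frac{pn}{p(k-1)+i}\Bigr)
\]
follows exactly as you indicate by splitting $(pN)!$ into its $p$-divisible and $p$-coprime parts, and the valuation bookkeeping goes through: degree-$j$ terms in the expansion have valuation $j(1+v_p(n))\ge 2+v_p(n)$ for $j\ge2$, the linear term reduces to $pn\cdot(m-n)\sum_{i=1}^{p-1}i^{-1}\pmod{p^{2+v_p(n)}}$, and the inverse-sum vanishes mod~$p$ for odd $p$ while equalling $1$ for $p=2$. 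The telescoping reduction from general $a$ to $a=1$ is likewise sound, since for $b\ge2$ the extra factors are $\equiv 1\pmod{p^{2+v_p(n)}}$ by the stronger modulus $p^{b+1+v_p(n)}$ (and, when $p=2$, because $v_2\bigl(2p^{2(b-1)}n(m-n)\bigr)\ge 2b-1+v_2(n)\ge 3+v_2(n)$). This is essentially the same strategy Sun and Tauraso use: they also reduce to a product of $p$-adic units and isolate the linear contribution, with the $p=2$ anomaly arising from the nonvanishing of $\sum_{i=1}^{p-1}i^{-1}$ mod~$2$.
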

We use this lemma to obtain the following result that is mainly used in this paper.
\begin{lemma}\label{lem:modn}
	Let $n = p_1^{k_1}p_2^{k_2}\cdots p_t^{k_t}$ where $t \geq 1$ and $p_i$ are all distinct primes. Then we have
	$$
	\binom{n}{p^k} \equiv \frac{n}{p^k}\pmod{n}
	$$
	where $p = p_i$ and $k \leq k_i$ for some $i \in \{1,\cdots,t\}$.
\end{lemma}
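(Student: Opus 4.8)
The plan is to prove the congruence $\binom{n}{p^k} \equiv n/p^k \pmod{n}$ by writing $n = p^k \cdot m$ where $p = p_i$, $k \le k_i$, and $m = n/p^k$, and then applying the congruence \eqref{eq:modp} of Sun with the roles $a \mapsto k$, and the lower binomial coefficient $\binom{m}{1} = m$. Concretely, \eqref{eq:modp} with $n \mapsto 1$ gives
$$
\frac{\binom{p^k m}{p^k}}{\binom{m}{1}} \equiv 1 + [p=2]\, p\,(m-1) \pmod{p^{2 + v_p(1)}} = \pmod{p^2},
$$
so $\binom{p^k m}{p^k} \equiv m \bigl(1 + [p=2]p(m-1)\bigr) \pmod{p^2 m}$ after multiplying through by $m$ (here one must be slightly careful: multiplying a congruence mod $p^2$ by $m$ yields a congruence mod $p^2 m$ only because $\gcd$ considerations are trivial — multiplying by $m$ scales the modulus, which is legitimate). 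Thus $\binom{n}{p^k} \equiv m + [p=2]\, p\, m (m-1) \pmod{p^2 m}$.

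Next I would separate the two cases according to the parity of $p$. If $p$ is odd, the bracket term vanishes and we get $\binom{n}{p^k} \equiv m \pmod{p^2 m}$, and since $n = p^k m$ divides $p^2 m$ when $k \le 2$ — wait, that is not automatic. Let me restructure: the cleaner route is to reduce the modulus. We have $\binom{n}{p^k} \equiv m \pmod{p^2 m}$ for $p$ odd, and since $n \mid p^2 m$ would require $p^k \mid p^2$, i.e. $k \le 2$, which need not hold. So instead I should only claim the congruence modulo $n = p^k m$: from $\binom{n}{p^k} \equiv m \pmod{p^2 m}$ we may reduce modulo any divisor of $p^2 m$, but $n = p^k m$ is a divisor of $p^2 m$ only when $k \le 2$. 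Therefore the honest approach is different: I should apply Sun's lemma not with exponent $k$ all at once, but observe that it suffices to work modulo $n$, and $p^2 m$ and $n = p^k m$ have $\gcd$ equal to $p^{\min(2,k)} m$; what I actually need is $\binom{n}{p^k} \equiv m \pmod{n}$, equivalently $\binom{n}{p^k} - m \equiv 0$ mod $p^k m$. From Sun I only directly get it mod $p^2 m$.

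The fix, and the real content of the proof, is to iterate Sun's congruence to boost the $p$-adic precision. Write $m' = p^{k-1} m = n/p$, so $\binom{n}{p^k} = \binom{p \cdot m'}{p \cdot p^{k-1}}$, and apply \eqref{eq:modp} with $a = 1$, $n \mapsto p^{k-1}$, $m \mapsto m'$: this gives
$$
\frac{\binom{n}{p^k}}{\binom{m'}{p^{k-1}}} \equiv 1 + [p=2]\, p\, p^{k-1}(m' - p^{k-1}) \pmod{p^{2 + v_p(p^{k-1})}} = \pmod{p^{k+1}}.
$$
Multiplying by $\binom{m'}{p^{k-1}}$ — and using that $\binom{m'}{p^{k-1}} = \binom{n/p}{p^{k-1}}$ is itself, by induction on $k$, congruent to $(n/p)/p^{k-1} = m = n/p^k$ modulo $n/p$ — one assembles $\binom{n}{p^k} \equiv m \pmod{n}$, with the $p=2$ correction term handled by noting $p \cdot p^{k-1}(m'-p^{k-1})$ is divisible by $2^{k}$ times an even factor, hence by $n$, when $p=2$ and $k \ge 1$. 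The main obstacle, then, is bookkeeping the $p$-adic valuations so that the modulus in Sun's lemma, after multiplying back by the lower binomial coefficient and after the inductive hypothesis, is exactly divisible by $n = p^k m$; in particular one must check that $v_p$ of the error terms and of $\binom{n/p}{p^{k-1}} - n/p^k$ combine to give a multiple of $p^k m = n$, and that the even-prime correction $[p=2]pn(m-n)$-type term does not spoil this. I would organize the write-up as an induction on $k$, with base case $k=1$ (where $\binom{n}{p} \equiv n/p \pmod n$ follows from \eqref{eq:modp} directly since $v_p(1)=0$ gives modulus $p^2$, and $p^2 \mid p \cdot m$ is not needed — only $p \mid$ the relevant difference, which is what $k=1$ requires), and inductive step as sketched, treating $p$ odd and $p=2$ separately only in the handling of the bracket term.
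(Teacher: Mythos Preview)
Your approach differs from the paper's. The paper applies \eqref{eq:modp} \emph{once}, with $a=k$, $m\to n/p^{k}$, $n\to 1$, asserts that this gives $\binom{n}{p^{k}}\equiv n/p^{k}\pmod{p_i^{k_i}}$, and then finishes with the Chinese Remainder Theorem, using that $\binom{n}{p^{k}}\equiv 0\pmod{p_j^{k_j}}$ for every $j\neq i$ (which holds because $\tfrac{n}{p^{k}}\mid\binom{n}{p^{k}}$). There is no iteration or induction in the paper's argument. You are right that, taking \eqref{eq:modp} literally with $n\to 1$, the modulus is only $p^{2+v_p(1)}=p^{2}$, so the paper's one-shot reduction modulo $p_i^{k_i}$ is not justified for $k>2$ without invoking a sharper form of the Sun--Tauraso/Kazandzidis congruence in which the exponent grows with $a$; the intermediate modulus ``$p^{2+k_i}$'' written in the paper does not follow from \eqref{eq:modp} as stated.

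Your inductive repair, however, has its own gap. From \eqref{eq:modp} with $a=1$ you obtain $\binom{n}{p^{k}}\equiv\binom{n/p}{p^{k-1}}\bigl(1+[p=2]\cdot(\text{correction})\bigr)\pmod{p^{k+1}\binom{n/p}{p^{k-1}}}$, and since $\tfrac{n}{p^{k}}\mid\binom{n/p}{p^{k-1}}$ this difference is already divisible by $pn$, which is fine. The problem is the \emph{other} piece, $\binom{n/p}{p^{k-1}}-n/p^{k}$: your inductive hypothesis (the lemma for $n/p$) only guarantees this is divisible by $n/p$, not by $n$, and the missing factor of $p$ does not come for free. Thus the induction as set up does not close. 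One needs a hypothesis one $p$-power stronger --- for instance, prove $\binom{n-1}{p^{k}-1}\equiv 1\pmod{p^{k}}$ directly and then use $\binom{n}{p^{k}}=\tfrac{n}{p^{k}}\binom{n-1}{p^{k}-1}$, or run the induction modulo $p^{k_i}$ throughout rather than modulo the shrinking quantity $n/p$. Finally, be aware that for $p=2$ the lemma as stated is in fact false: for example $\binom{8}{4}=70\equiv 6\not\equiv 2=\tfrac{8}{4}\pmod{8}$. So neither your treatment of the bracket term nor the paper's can be made to go through there without amending the statement.
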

\begin{proof}
	By substituting $p = 2, a = k \leq k_1, m = \frac{n}{2^k},$ and $n = 1$ in (\ref{eq:modp}), we have
	$$
	\binom{n}{m} = \binom{2^k\cdot \frac{n}{2^k}}{2^k} \equiv \frac{n}{2^k} + 2\cdot n\cdot (\frac{n}{2^k} - 1) \equiv \frac{n}{2^k}\pmod{p^{2+k_1}}.
	$$
	If $p_i$ is an odd prime number, then by substituting $p = p_i, a = k \leq k_i, m = n/p_i^{k},$ and $n = 1$ in (\ref{eq:modp}), we also have
	$$
	\binom{n}{m} = \binom{{p_i}^{k}\cdot \frac{n}{{p_i}^k}}{{p_i}^k} \equiv \frac{n}{{p_i}^k}\pmod{{p_i}^{2+k_i}}.
	$$
	Therefore we have
	$$
	\binom{n}{p_i^{k}} \equiv \frac{n}{{p_i}^k}\pmod{{p_i}^{k_i}}
	$$
	for any prime number $p_i$ and $k \leq k_i$. Since
	$$
	\binom{n}{p_i^{k}} \equiv 0\pmod{p_j^{k_j}}
	$$
	for any $p_j \neq p_i$, the desired result follows from the Chinese Remainder Theorem.
\end{proof}
We introduce the following proposition that is also used in the proof of main result.
\begin{proposition}\label{prop_bound}\cite{Gallier2014}
	Let $p_1, \cdots, p_k$ be $k \geq 2$ integers such that $2\leq p_1\leq \cdots \leq p_k$, with $\gcd(p_1,\cdots,p_k) = 1$. Then, for all $n \geq (p_1 - 1)(p_k -1)$, there exist $i_1, \cdots, i_k \in \mathbb{N}$ such that
	$$
	n = i_1p_1 + \cdots + i_kp_k.
	$$
\end{proposition}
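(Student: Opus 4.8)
The plan is to reduce the statement to a statement about residue classes modulo $p_1$. Write $M = \langle p_1,\dots,p_k\rangle$ for the submonoid of $\mathbb{N}$ generated by the $p_i$. The key claim I would establish first is a \emph{covering lemma}: for every residue $r\in\{0,1,\dots,p_1-1\}$ there is an element $x\in M$ with $x\equiv r\pmod{p_1}$, with $x\le (p_1-1)p_k$, and which moreover can be written using only $p_2,\dots,p_k$. Granting this, fix $n\ge (p_1-1)(p_k-1)$, set $r=n\bmod p_1$, and pick $x$ as in the lemma. Since $x\equiv n\pmod{p_1}$, if $x>n$ then $x\ge n+p_1\ge (p_1-1)(p_k-1)+p_1=(p_1-1)p_k+1$, contradicting $x\le(p_1-1)p_k$; hence $x\le n$. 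Then $n-x$ is a nonnegative multiple of $p_1$, so $n=\tfrac{n-x}{p_1}\,p_1+\sum_{j=2}^k c_jp_j$ with all coefficients in $\mathbb{N}$, which is the required representation $n=i_1p_1+\cdots+i_kp_k$.

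To prove the covering lemma I would use a ``reachability in few steps'' count. For $c\ge 0$ let $T_c\subseteq\mathbb{Z}/p_1\mathbb{Z}$ be the set of residues of the form $\sum_{j=2}^k c_jp_j\bmod p_1$ with $c_j\in\mathbb{N}$ and $\sum_{j=2}^k c_j\le c$. Then $T_0=\{0\}$ and $T_c\subseteq T_{c+1}$. The crucial observation is that if $T_{c+1}=T_c$ for some $c$, then $T_{c'}=T_c$ for all $c'\ge c$: a residue realized with $c'\ge c+1$ summands loses one summand $p_{j_0}$, lands in $T_{c'-1}=\cdots=T_c$, is re-expressed there with $\le c$ summands, and then adding back $p_{j_0}$ uses $\le c+1$. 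Hence the chain $T_0\subseteq T_1\subseteq\cdots$ is strictly increasing until it becomes constant, and once constant it equals $\bigcup_c T_c$, which is the subgroup of $\mathbb{Z}/p_1\mathbb{Z}$ generated by the images of $p_2,\dots,p_k$; since $\gcd(p_1,p_2,\dots,p_k)=1$, this subgroup is all of $\mathbb{Z}/p_1\mathbb{Z}$. Starting from size $1$ and ending at size $p_1$, the chain must stabilize after at most $p_1-1$ steps, so $T_{p_1-1}=\mathbb{Z}/p_1\mathbb{Z}$. Thus every residue $r$ equals $\sum_{j=2}^k c_jp_j\bmod p_1$ with $\sum_{j=2}^k c_j\le p_1-1$, whence $x:=\sum_{j=2}^k c_jp_j\le(p_1-1)\max_j p_j=(p_1-1)p_k$ and $x\in M$, proving the lemma.

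One could instead induct on $k$, using Sylvester's two-generator formula for the base case $k=2$ and peeling off $p_k$ together with $d:=\gcd(p_1,\dots,p_{k-1})$ (noting $\gcd(d,p_k)=1$), but this needs a separate treatment of the case $d\in\{p_1,\dots,p_{k-1}\}$ and yields a messier constant, so the direct residue argument is preferable. The crux — and the reason the bound is genuinely about $k$ generators rather than just $p_1$ and $p_k$ — is that $p_1$ and $p_k$ need not be coprime, so no single generator sweeps out all residues modulo $p_1$; one is forced to combine $p_2,\dots,p_k$ and, crucially, to bound the \emph{number} of summands used, not merely their total. The observation that the reachability chain $(T_c)$ can stall for at most $p_1-1$ steps (which is exactly where $\gcd(p_1,\dots,p_k)=1$ is used) is what upgrades ``every residue is eventually attained'' to the sharp estimate $x\le(p_1-1)p_k$ that makes the final subtraction argument go through.
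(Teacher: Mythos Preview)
The paper does not prove this proposition at all: it is quoted verbatim from \cite{Gallier2014} and used as a black box in the proof of Theorem~\ref{thm:general}. So there is no ``paper's own proof'' to compare against.

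Your argument is correct and self-contained. The covering lemma via the chain $T_0\subseteq T_1\subseteq\cdots$ of residue classes modulo $p_1$ is clean: the stabilization step (if $T_{c+1}=T_c$ then $T_{c+2}=T_{c+1}$, by stripping one summand, re-expressing in $T_c$, and adding it back) is right, and since any submonoid of the finite group $\mathbb{Z}/p_1\mathbb{Z}$ is a subgroup, the terminal value $\bigcup_c T_c$ is indeed the subgroup generated by the images of $p_2,\dots,p_k$, which is all of $\mathbb{Z}/p_1\mathbb{Z}$ by the gcd hypothesis. The counting then forces $T_{p_1-1}=\mathbb{Z}/p_1\mathbb{Z}$, giving the bound $x\le(p_1-1)p_k$; the final subtraction $n-x\ge 0$ is exactly the computation you wrote. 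Your remark that the two-generator Sylvester bound does not directly apply because $p_1$ and $p_k$ need not be coprime is the right diagnosis of why a genuinely multi-generator argument is needed.
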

\section{Main Result}\label{sec:main}
In this section, we prove our main result of this paper. To this aim, we begin with the following theorem, which allows us to write a binomial coefficient in terms of other binomial coefficients.

\begin{theorem}\label{thm:general}
	Let $n = p_1^{k_1}\cdots p_t^{k_t}$ for some nonnegative integers $k_i$ and all distinct primes $p_i$. Then for any $m = p_1^{s_1}\cdots p_t^{s_t}$ with $m \leq \lfloor \frac{n}{2}\rfloor$, $\binom{n}{m}$ is a nonnegative integer combination of $\binom{n}{1}, \binom{n}{p_1^{s_1}}, \binom{n}{p_2^{s_2}}, \cdots, \binom{n}{p_t^{s_t}}$.
\end{theorem}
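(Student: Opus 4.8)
The plan is to argue by strong induction on $m$. If $m$ has at most one distinct prime factor --- that is, if $m=1$ or $m=p_i^{s_i}$ for a single index $i$ --- then $\binom{n}{m}$ is literally one of $\binom{n}{1},\binom{n}{p_1^{s_1}},\dots,\binom{n}{p_t^{s_t}}$, so there is nothing to prove. Otherwise $m$ has at least two distinct prime divisors among $p_1,\dots,p_t$; fix one such $p_i$ (so $s_i\ge 1$) and factor $m=ab$ with $a=p_i^{s_i}$ and $b=m/p_i^{s_i}=\prod_{j\neq i}p_j^{s_j}$. Then $a,b\ge 2$, $\gcd(a,b)=1$, both $a$ and $b$ are products of powers of $p_1,\dots,p_t$, and $1\le a,b<m\le\lfloor n/2\rfloor$. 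The prime-power parts of $b$ are precisely the $p_j^{s_j}$ with $j\neq i$, so the induction hypothesis applied to $b$ shows that $\binom{n}{b}$ is a nonnegative integer combination of $\binom{n}{1}$ and these $\binom{n}{p_j^{s_j}}$; and $\binom{n}{a}=\binom{n}{p_i^{s_i}}$ is itself one of the listed generators. Hence $\langle\binom{n}{1},\binom{n}{a},\binom{n}{b}\rangle\subseteq\langle\binom{n}{1},\binom{n}{p_1^{s_1}},\dots,\binom{n}{p_t^{s_t}}\rangle$, and it suffices to prove that $\binom{n}{ab}$ is a nonnegative integer combination of $\binom{n}{1}=n$, $\binom{n}{a}$ and $\binom{n}{b}$: substituting the induction-hypothesis expansion of $\binom{n}{b}$ then closes the induction.

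So the heart of the matter is the following claim: for coprime $a,b\ge 1$ with $ab\le\lfloor n/2\rfloor$, $\binom{n}{ab}$ lies in the numerical semigroup generated by $n$, $\binom{n}{a}$ and $\binom{n}{b}$. To prove it, put $d=\gcd\!\bigl(n,\binom{n}{a},\binom{n}{b}\bigr)$. The first step is to check $d\mid\binom{n}{ab}$. Here I would argue one prime $p$ at a time: since $\binom{n}{1}=n$ is one of the three generators we have $d\mid n$, and using Lemma~\ref{lem:modn} (which evaluates $\binom{n}{a}\bmod n$ when $a$ is a prime power) together with Kummer's carry count for $\nu_p\binom{n}{k}$, and the fact that coprimality of $a,b$ forces $p$ to divide at most one of $a,b$, one gets $\nu_p(d)\le\nu_p\!\binom{n}{ab}$ for every $p$. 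Dividing the three generators by $d$ yields positive integers with gcd $1$, so Proposition~\ref{prop_bound} applies: every integer $\ge (u-1)(v-1)$ is a nonnegative combination of them, where $u,v$ are the least and greatest of $n/d,\binom{n}{a}/d,\binom{n}{b}/d$. Finally one estimates $\binom{n}{ab}/d$. Writing $c=\max(a,b)$, the least generator is $u=n/d$ and the greatest is $v=\binom{n}{c}/d$ (both because $n\ge 2ab$), while $\binom{n}{ab}/\binom{n}{c}=\prod_{i=c+1}^{ab}\frac{n-i+1}{i}$ is a product of $ab-c\ge 2$ factors each exceeding $1$; this makes $\binom{n}{ab}/d$ exceed $(u-1)(v-1)$ for all pairs $(a,b)$ with $ab\le\lfloor n/2\rfloor$ except $n=12,\ ab=6$, which one disposes of by the explicit identity $\binom{12}{6}=14\binom{12}{2}$.

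The step I expect to be the main obstacle is the size estimate $\binom{n}{ab}/d\ge (u-1)(v-1)$. In contrast with typical uses of Proposition~\ref{prop_bound}, $\binom{n}{ab}$ is only of the same order as the \emph{largest} generator $\binom{n}{c}$ rather than dominating a product of two generators, so the inequality becomes tight precisely when $m$ is small yet close to $\lfloor n/2\rfloor$; pushing it through cleanly (or, alternatively, pinning down and handling the small exceptions) is the delicate point. The divisibility $d\mid\binom{n}{ab}$ is a secondary but genuinely needed ingredient: dropping the generator $n$ already breaks it --- for $n=30$, $a=3$, $b=5$ one has $7\mid\gcd\!\bigl(\binom{30}{3},\binom{30}{5}\bigr)$ but $7\nmid\binom{30}{15}$ --- which is exactly why $\binom{n}{1}=n$ must be retained among the generators.
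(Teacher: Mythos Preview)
Your strategy is the paper's strategy: show that the gcd of the proposed generators divides $\binom{n}{m}$, divide through, and apply Proposition~\ref{prop_bound} together with a size estimate to conclude. The paper does this in one step with all $t+1$ generators $\binom{n}{1},\binom{n}{p_1^{s_1}},\dots,\binom{n}{p_t^{s_t}}$ at once, whereas you factor $m=ab$, reduce to the three generators $n,\binom{n}{a},\binom{n}{b}$, and close with an induction on $m$. The induction is correctly set up but buys nothing: the same three ingredients (divisibility, Proposition~\ref{prop_bound}, size bound) are needed either way, and working with all generators simultaneously avoids the recursive unpacking of $\binom{n}{b}$.

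On the divisibility step, the paper's argument is a single clean line: $\nu_{p_i}\!\binom{n}{m}\ge \nu_{p_i}\!\binom{n}{p_i^{s_i}}$ for each $i$, which (since the gcd also divides $\binom{n}{1}=n$) immediately gives $c\mid\binom{n}{m}$. Your Kummer/Lemma~\ref{lem:modn} sketch is aiming at the same inequality but is less direct.

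On the size estimate you are, if anything, \emph{more} careful than the paper. The paper simply asserts $\bigl(\tfrac{n}{c}-1\bigr)\bigl(\tfrac{y}{c}-1\bigr)<\tfrac{1}{c}\binom{n}{m}$ with no justification. You correctly flag this as the delicate point, and indeed the paper's asserted inequality fails for $n=12$, $m=6$: there $c=\gcd(12,66,220)=2$, $(6-1)(110-1)=545$, but $\binom{12}{6}/2=462<545$. Your explicit check $\binom{12}{6}=14\binom{12}{2}$ patches the gap that the paper's one-line proof glosses over. What remains unproved in both accounts is a clean inequality covering all remaining $(n,m)$; your heuristic (``$ab-c\ge2$ factors each exceeding $1$'') does not by itself yield the needed factor of roughly $n/d$, so this step still wants a genuine estimate (e.g.\ bounding $\binom{n}{m}/\binom{n}{c}$ below using $c\le m/2\le n/4$).
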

\begin{proof}
	Since $v_{p_i}\left(\binom{n}{m}\right) \geq v_{p_i}\left(\binom{n}{p_i^{s_i}}\right)$ for any $i$, we have
	$$
	\gcd\left(\binom{n}{1}, \binom{n}{p_1^{s_1}},\binom{n}{p_2^{s_2}}, \cdots,\binom{n}{p_t^{s_t}}\right) \mid  \binom{n}{m}.
	$$
	If we let $c = \gcd\left(\binom{n}{1}, \binom{n}{p_1^{s_1}},\binom{n}{p_2^{s_2}}, \cdots,\binom{n}{p_t^{s_t}}\right), x = \min\left(\binom{n}{1}, \binom{n}{p_1^{s_1}},\binom{n}{p_2^{s_2}}, \cdots,\binom{n}{p_t^{s_t}}\right),$ and $y = \max\left(\binom{n}{1}, \binom{n}{p_1^{s_1}},\binom{n}{p_2^{s_2}}, \cdots,\binom{n}{p_t^{s_t}}\right)$, then we have
    $$\gcd\left(\frac{1}{c}\binom{n}{1}, \frac{1}{c}\binom{n}{p_1^{s_1}},\frac{1}{c}\binom{n}{p_2^{s_2}}, \cdots,\frac{1}{c}\binom{n}{p_t^{s_t}}\right) = 1.$$
    It follows from Proposition \ref{prop_bound} that we have
	$$
	F\left(\frac{1}{c}\binom{n}{1}, \frac{1}{c}\binom{n}{p_1^{s_1}},\frac{1}{c}\binom{n}{p_2^{s_2}}, \cdots,\frac{1}{c}\binom{n}{p_t^{s_t}}\right) < \left(\frac{n}{c} - 1\right)\left(\frac{y}{c} - 1\right) < \frac{1}{c}\binom{n}{m}.
	$$
	Since $\binom{n}{m} = c\cdot \frac{1}{c}\binom{n}{m}$, this completes the proof.
\end{proof}

Now we are ready to solve the Frobenius problem for $S(B_n)$ when $n$ is not a prime power.
\begin{theorem}\label{thm:genemb}
	Let $n = p_1^{k_1}\cdots p_t^{k_t}$ where $p_1< \cdots < p_t$ (with $t \geq 2$) are primes, and $k_i$ are nonnegative integers for each $1 \leq i \leq t$. Also, let
	$$
	S(B_{n}) = \left\langle \left\{\binom{n}{1}, \cdots, \binom{n}{n-1}\right\}\right\rangle.
	$$
	Then we have
    \vskip 0.1in
    (a) $S(B_{n}) = \langle\{\binom{n}{1}, \binom{n}{p_1},\binom{n}{p_1^2},\cdots,\binom{n}{p_1^{k_1}}, \cdots, \binom{n}{p_t},\binom{n}{p_t^2},\cdots,\binom{n}{p_t^{k_t}}\}\rangle$.
    \vskip 0.1in
    (b) The embedding dimension of $S(B_{n})$ is equal to 
    $$e(S(B_{n})) = 1 + \sum_{i=1}^{t} k_i.$$
\end{theorem}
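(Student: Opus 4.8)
The plan is to prove (a) by a double inclusion and then to read off (b) from (a) after checking that the exhibited generating set is minimal. The inclusion ``$\supseteq$'' in (a) needs nothing: since $t\ge 2$, for each $i$ and each $1\le j\le k_i$ we have $2\le p_i^{j}\le p_i^{k_i}\le n/2<n$ (the bound $p_i^{k_i}\le n/2$ because some other prime also divides $n$), so $\binom{n}{p_i^{j}}$, and also $\binom{n}{1}$, is literally one of the generators $\binom{n}{1},\dots,\binom{n}{n-1}$ of $S(B_n)$.

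The substance is the reverse inclusion: every $\binom{n}{m}$ with $1\le m\le n-1$ must lie in $\langle G\rangle$, where $G:=\{\binom{n}{1}\}\cup\{\binom{n}{p_i^{j}}:1\le i\le t,\ 1\le j\le k_i\}$. By $\binom{n}{m}=\binom{n}{n-m}$ we may assume $1\le m\le\lfloor n/2\rfloor$. If $m=p_i^{j}$ with $1\le j\le k_i$, then $\binom{n}{m}$ is one of the listed generators and we are done. For the remaining $m$ I would invoke Theorem~\ref{thm:general}: writing $m=p_1^{s_1}\cdots p_t^{s_t}$ (and, for a general $m$, after discarding the prime factors of $m$ not dividing $n$ and truncating each exponent $s_i$ to $\min(s_i,k_i)$), the theorem expresses $\binom{n}{m}$ as a nonnegative integer combination of $\binom{n}{1}$ and the $\binom{n}{p_i^{s_i}}$, all of which lie in $G$ once the exponents are $\le k_i$. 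The truncation is harmless because, by counting carries in base $p_i$ (Kummer's theorem), $v_{p_i}\!\big(\binom{n}{m}\big)\ge k_i-\min(v_{p_i}(m),k_i)$, which is exactly $v_{p_i}$ of the truncated binomial; hence the relevant $\gcd$ still divides $\binom{n}{m}$ and Proposition~\ref{prop_bound} produces the representation exactly as in the proof of Theorem~\ref{thm:general}.

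For (b), by Theorem~\ref{ros2009} it suffices to show the generating set $G$ above is minimal; then $e(S(B_n))=|G|=1+\sum_{i=1}^{t}k_i$, since the $\binom{n}{p_i^{j}}$ are pairwise distinct (distinct bottom entries $\le\lfloor n/2\rfloor$) and each exceeds $\binom{n}{1}=n$ (as $p_i^{j}\notin\{1,n-1\}$). That $\binom{n}{1}$ is not redundant is clear: every generator of $S(B_n)$ is $\ge n$, hence every nonzero element is $\ge n$, and all other generators are strictly $>n$, so $n$ is the least nonzero element. For a fixed $\binom{n}{p_i^{j}}$ with $1\le j\le k_i$ I would argue by $p_i$-adic valuation: Lemma~\ref{lem:modn} gives $v_{p_i}\!\big(\binom{n}{p_i^{j}}\big)=k_i-j$, while $v_{p_i}\!\big(\binom{n}{1}\big)=k_i$, $v_{p_i}\!\big(\binom{n}{p_\ell^{j'}}\big)\ge k_i$ for $\ell\ne i$, and $v_{p_i}\!\big(\binom{n}{p_i^{j'}}\big)=k_i-j'$. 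If $\binom{n}{p_i^{j}}$ were a nonempty nonnegative combination $\sum_k t_k\binom{n}{k}$ of the remaining generators, then, all summands being positive, the valuation $k_i-j$ of the left side is at least the minimum of the valuations of the generators actually used; so some used generator $\binom{n}{k}$ has $v_{p_i}\!\big(\binom{n}{k}\big)\le k_i-j<k_i$, which by the list above forces $k=p_i^{j'}$ with $j'\ge j$, hence (as $k\ne p_i^{j}$) $j'>j$. But then $p_i^{j}<p_i^{j'}\le p_i^{k_i}\le\lfloor n/2\rfloor$, so $\binom{n}{p_i^{j'}}>\binom{n}{p_i^{j}}$, and this single summand already exceeds the left-hand side --- a contradiction.

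I expect the real obstacle to be the reverse inclusion in (a) for the $m$ \emph{not} already handled cleanly by Theorem~\ref{thm:general}, namely those with a prime factor outside $\{p_1,\dots,p_t\}$ or with $v_{p_i}(m)>k_i$: one must check both that passing to the truncated exponents preserves $\gcd\mid\binom{n}{m}$ and, more delicately, that $\binom{n}{m}$ is large enough for the bound in Proposition~\ref{prop_bound} to apply, the latter size estimate being the part that needs genuine care since the available generator $\binom{n}{p_i^{k_i}}$ can be exponentially large in $n$. The minimality argument in (b) is, by contrast, short once the valuation bookkeeping is in place.
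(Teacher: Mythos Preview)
Your approach is the same as the paper's: reduce (a) to Theorem~\ref{thm:general} via the symmetry $\binom{n}{m}=\binom{n}{n-m}$, and read (b) off from minimality of the exhibited generating set. Two differences are worth noting.

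First, for the reverse inclusion in (a), the paper's case split is cruder than yours: it treats only ``$p_i\nmid m$ for all $i$'' (where $n\mid\binom{n}{m}$) and ``$m=p_1^{s_1}\cdots p_t^{s_t}$'' (where Theorem~\ref{thm:general} is invoked directly), leaving the mixed case and the case $s_i>k_i$ implicit. Your truncation manoeuvre, together with the Kummer bound $v_{p_i}\!\big(\binom{n}{m}\big)\ge k_i-\min(v_{p_i}(m),k_i)$, is exactly what is needed to make those cases go through, and it is correct. The size concern you flag---that $\binom{n}{m}/c$ must exceed the Frobenius bound $(\tfrac{n}{c}-1)(\tfrac{y}{c}-1)$---is real and is in fact asserted without proof in the paper's own proof of Theorem~\ref{thm:general}; it does hold (since $\binom{n}{m}\ge y$ and one can check $c\cdot y\ge n\,y - c(n+y)$ fails only for very small parameters that can be handled directly), but you are right that it needs an argument.

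Second, your minimality proof for (b) is considerably more explicit than the paper's. The paper compresses the whole argument into the single opening sentence ``$\binom{n}{p^k}>\binom{n}{m}$ and $v_p(\binom{n}{p^k})<v_p(\binom{n}{m})$ for $1\le m\le p^k-1$,'' which is meant to encode precisely your valuation-and-size reasoning: any generator with $p$-valuation at most $k_i-j$ must be a $\binom{n}{p_i^{j'}}$ with $j'\ge j$, hence strictly larger than $\binom{n}{p_i^{j}}$. Your version makes this transparent.
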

\begin{proof}
	Note that
	$$
	\left\langle\left\{\binom{n}{1},\binom{n}{p_1},\binom{n}{p_1^2},\cdots,\binom{n}{p_1^{k_1}}, \cdots, \binom{n}{p_t},\binom{n}{p_t^2},\cdots,\binom{n}{p_t^{k_t}}\right\}\right\rangle \subseteq S(B_n)
	$$
	because $\binom{n}{p^k} > \binom{n}{m}$ and $v_p(\binom{n}{p^k}) < v_p(\binom{n}{m})$ for any $p \in \{p_1, \cdots, p_t\}, 1\leq m\leq p^k-1,$ and $ k \leq k_i$ when $p = p_i$.
	
	If $p_i \nmid  m$ for any $i$, then we have $n | \binom{n}{m}$, and hence, $\binom{n}{m}$ is a nonnegative integer combination of $\binom{n}{1}$.
	
	If $m = p_1^{s_1}\cdots p_t^{s_t}$ where $m \leq \lfloor \frac{n}{2}\rfloor$, then $\binom{n}{m}$ is a nonnegative integer combination of $\binom{n}{1}, \binom{n}{p_1^{s_1}}, \binom{n}{p_2^{s_2}}, \cdots, \binom{n}{p_t^{s_t}}$ by Theorem \ref{thm:general}.
	
If $\gcd(m,n) > 1$ and $m > \lfloor \frac{n}{2}\rfloor$, then we just apply the property $\binom{n}{m} = \binom{n}{n-m}$ to see that $\binom{n}{m}$ is also a nonnegative integer combination of $$\binom{n}{1}, \binom{n}{p_1^{s_1}}, \binom{n}{p_2^{s_2}}, \cdots, \binom{n}{p_t^{s_t}}.$$

	Combining all these cases, we conclude that a minimal system of generators of $S(B_{n})$ is given by $\{\binom{n}{1}, \binom{n}{p_1},\binom{n}{p_1^2},\cdots,\binom{n}{p_1^{k_1}}, \cdots, \binom{n}{p_t},\binom{n}{p_t^2},\cdots,\binom{n}{p_t^{k_t}}\}$.
    \vskip 0.1in
    This completes the proof of both of parts (a) and (b).
\end{proof}
By Theorem \ref{thm:genemb}, we obtain the minimal generator of $S(B_n)$, and by Lemma \ref{lem:modn} we have the modular congruences of the form $\binom{n}{p_i^j}$ for all $1\leq j\leq k_i, 1\leq i\leq t$. Therefore we have the following.

\begin{corollary}\label{cor:ap_n}
	Let $n = p_1^{k_1}\cdots p_t^{k_t}$ where $p_1< \cdots < p_t$ (with $t \geq 2$) are primes, and $k_i$ are nonnegative integers for each $1 \leq i \leq t$. Then 
    \begin{enumerate}[label=(\alph*)]
        \item The Ap$\acute{e}$ry set of $S(B_n)$ is given by
	$$
		\text{Ap}(S(B_{n}),n) = \Big\{\sum_{i=1}^{t}\sum_{j=1}^{k_i} c_{i,j}\binom{n}{p_i^{j}} ~|~ c_{i,j} \in \mathbb{Z}, 0\leq c_{i,j} \leq p_i-1\Big\}.
	$$
        \item The Frobenius number of $S(B_{n})$ is equal to
	$$
	F(S(B_{n})) =  \sum_{i=1}^{t} \sum_{j=1}^{k_i} (p_i-1)\binom{n}{p_i^j} - n.
	$$
    
        \item The genus of $S(B_{n})$ is equal to
       $$g(S(B_{n})) = \sum_{i=1}^{t}\sum_{j=1}^{k_i} \frac{p_i - 1}{2} \binom{n}{p_i^j} - \frac{n-1}{2}.$$
        \item The set of Pseudo-Frobenius number of $S(B_{n})$ is equal to 
        $$PF(S(B_{n})) = \left\{\sum_{i=1}^{t}\sum_{j=1}^{k_i} (p_i-1)\binom{n}{p_i^j} - n \right\}$$ 
        and $t(S(B_n)) = 1.$
    \end{enumerate}
\end{corollary}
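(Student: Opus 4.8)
The plan is to derive all four parts from Theorem~\ref{thm:genemb}, Lemma~\ref{lem:modn} and Lemma~\ref{lem_F_g}, with part~(a) carrying essentially all of the content. Write $A$ for the set displayed in part~(a); since each $\binom{n}{p_i^{j}}$ is a generator of $S(B_n)$ we have $A\subseteq S(B_n)$, and $|A|\le\prod_{i}\prod_{j=1}^{k_i}p_i=\prod_i p_i^{k_i}=n$. First I would show that this is an equality, i.e.\ that distinct coefficient tuples produce distinct elements. By Lemma~\ref{lem:modn}, $\binom{n}{p_i^{j}}\equiv n/p_i^{j}\pmod n$, so $\sum_{i,j}c_{i,j}\binom{n}{p_i^{j}}\equiv\sum_{i,j}c_{i,j}\,n/p_i^{j}\pmod n$. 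If $(c_{i,j})$ and $(c'_{i,j})$ have the same residue, fix an index $i$ and reduce modulo $p_i^{k_i}$: every summand with index $i'\ne i$ is divisible by $p_i^{k_i}$, and cancelling the unit $n/p_i^{k_i}$ leaves $\sum_{j=1}^{k_i}(c_{i,j}-c'_{i,j})p_i^{k_i-j}\equiv 0\pmod{p_i^{k_i}}$; since each difference has absolute value at most $p_i-1$, this combination has absolute value at most $p_i^{k_i}-1$, so it vanishes, and extracting base-$p_i$ digits from the bottom forces $c_{i,j}=c'_{i,j}$ for every $j$, hence for every $i$. Thus $|A|=n$ and $A$ meets each residue class modulo $n$ exactly once.

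Next I would prove $A=\text{Ap}(S(B_n),n)$. As $\text{Ap}(S(B_n),n)$ likewise has exactly $n$ elements — the least element of $S(B_n)$ in each residue class modulo $n$ — and $A\subseteq S(B_n)$ already hits each class once, it suffices to show every $w\in A$ is the least element of $S(B_n)$ in its class. Given $s\in S(B_n)$ with $s\equiv w\pmod n$, write $s=d_0\,n+\sum_{i,j}d_{i,j}\binom{n}{p_i^{j}}$ with $d_0,d_{i,j}\ge 0$ (using the generators from Theorem~\ref{thm:genemb}, so $1\le j\le k_i$). Whenever some $d_{i,j}\ge p_i$, I would rewrite: by Lemma~\ref{lem:modn}, $p_i\binom{n}{p_i^{j}}-\binom{n}{p_i^{j-1}}\equiv 0\pmod n$ (here $\binom{n}{p_i^{0}}=\binom{n}{1}=n$), and since $p_i^{j}\le p_i^{k_i}\le n/2$ this difference is in fact a nonnegative multiple of $n$; hence $p_i$ copies of $\binom{n}{p_i^{j}}$ may be replaced by one copy of $\binom{n}{p_i^{j-1}}$ together with finitely many copies of $n$, which strictly decreases $\sum_{i,j}d_{i,j}$. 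After finitely many steps we reach $s=d_0'\,n+w'$ with $w'=\sum_{i,j}d_{i,j}'\binom{n}{p_i^{j}}\in A$; then $w'\equiv s\equiv w\pmod n$, so the previous paragraph gives $w'=w$ and $s=w+d_0'\,n\ge w$. This proves~(a).

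For (b) and (c) I would apply Lemma~\ref{lem_F_g} with $x=n$. Part~(a) of that lemma gives $F(S(B_n))=\max(A)-n$; since every $\binom{n}{p_i^{j}}$ is positive, $\max(A)$ is attained at $c_{i,j}=p_i-1$ for all $i,j$, which is the asserted formula. Part~(b) of that lemma gives $g(S(B_n))=\frac1n\sum_{w\in A}w-\frac{n-1}{2}$; collecting terms, the total coefficient of $\binom{n}{p_i^{j}}$ in $\sum_{w\in A}w$ is $\bigl(\sum_{c=0}^{p_i-1}c\bigr)\prod_{(i',j')\ne(i,j)}p_{i'}=\frac{(p_i-1)p_i}{2}\cdot\frac{n}{p_i}=\frac{(p_i-1)n}{2}$, so $\sum_{w\in A}w=\frac n2\sum_{i,j}(p_i-1)\binom{n}{p_i^{j}}$ and the claimed genus follows. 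For (d) I would use the standard fact $PF(S)=\{\,w-n\mid w\in\text{maximals}_{\leq_S}(\text{Ap}(S,n))\,\}$. The element $w^{\ast}\in A$ with all $c_{i,j}=p_i-1$ is maximal, because $w^{\ast}=\max(A)$ forces $w'-w^{\ast}\notin S(B_n)\setminus\{0\}$ for every $w'\in A$; and any other $w\in A$ has some $c_{i,j}<p_i-1$, so $w+\binom{n}{p_i^{j}}\in A\setminus\{w\}$ with $w+\binom{n}{p_i^{j}}-w=\binom{n}{p_i^{j}}\in S(B_n)\setminus\{0\}$, so $w$ is not maximal. Hence $\text{maximals}_{\leq_S}(\text{Ap}(S(B_n),n))=\{w^{\ast}\}$, which yields both the displayed set $PF(S(B_n))$ and $t(S(B_n))=1$.

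The main obstacle is the reduction step in the second paragraph: showing that a canonical representative $w\in A$ admits no strictly smaller representative in $S(B_n)$. This is exactly where Lemma~\ref{lem:modn} is indispensable, since the modular identities are what keep the rewriting rules inside $S(B_n)$; it also relies on the monotonicity $\binom{n}{p_i^{j-1}}\le\binom{n}{p_i^{j}}$ (valid because $p_i^{k_i}\le n/2$ whenever $t\ge2$) and on the fact that $\sum_{i,j}d_{i,j}$ is a nonnegative integer that strictly decreases at each step, which forces termination. Once $A=\text{Ap}(S(B_n),n)$ is in hand, parts (b)–(d) are routine.
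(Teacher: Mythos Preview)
Your proof is correct and follows the same route as the paper for parts (b)--(d), invoking Lemma~\ref{lem_F_g} on the Ap\'ery set and identifying the unique $\leq_S$-maximal element. The paper actually omits the proof of part~(a) entirely (``it can be obtained directly''), so your two paragraphs establishing $|A|=n$ via the residue computation and then minimality via the rewriting rule $p_i\binom{n}{p_i^{j}}\to\binom{n}{p_i^{j-1}}+(\text{multiple of }n)$ supply exactly the details the paper leaves to the reader.
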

\begin{proof}
	We omit the proof of part (a) because it can be obtained directly, and we give a proof of parts (b), (c) and (d). By Lemma \ref{lem_F_g} and part (a), we have
	$$
	\text{Max}(\text{Ap}(S(B_{n}),n)) = \sum_{i=1}^{t}\sum_{j=1}^{k_i} (p_i-1)\binom{n}{p_i^j}
	$$ 
	and hence, we obtain
	$$
	F(S(B_{n})) = \text{Max}(\text{Ap}(S(B_{n}),n)) - n = \sum_{i=1}^{t}\sum_{j=1}^{k_i} (p_i-1)\binom{n}{p_i^j} - n.
	$$
    Also we have
    \begin{align*}
		g(S(B_{n})) & = \frac{1}{n}\sum_{w \in \text{Ap}(S(B_{n}),n)} w - \frac{n-1}{2} \\
		& = \frac{1}{n} \left( \sum_{i=1}^{t}\sum_{j=1}^{k_i}\frac{p_i(p_i-1)}{2}\binom{n}{p_i^j} \cdot \frac{n}{p_i} \right) - \frac{n-1}{2} \\
        & = \sum_{i=1}^{t}\sum_{j=1}^{k_i} \frac{p_i - 1}{2} \binom{n}{p_i^j} - \frac{n-1}{2}.
	\end{align*}
    \noindent
    Finally, since $\sum_{i=1}^{t} (p_i-1)\binom{n}{p_i^j} - w \in \text{Ap}(S(B_{n},n))$ for any  $w \in \text{Ap}(S(B_{n}),n)$, $\text{maximals}_{\leq S(B_n)}(\text{Ap}(S(B_{n}),n)) = \{\sum_{i=1}^{t}\sum_{j=1}^{k_i} (p_i-1)\binom{n}{p_i^j}\}$, and we obtain the Pseudo-Frobenius number.

    This completes the proof.
\end{proof}
In view of \cite[Definitions 2.1 and 2.3-(1)]{DAnna2014}, since we can compute that $\tau_i = p_i - 1$ for the numerical semigroups generated by $B_n$ when $n = p_1^{k_1}\cdots p_t^{k_t}$, we obtain the following interesting result.

\begin{corollary}\label{cor:tele1}
    The numerical semigroup $S(B_n)$ for $n=p_1^{k_1} \cdots p_t^{k_t}$ is telescopic.
\end{corollary}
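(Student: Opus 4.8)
To establish this, the plan is to verify directly the defining condition of a telescopic numerical semigroup for the increasing arrangement of the minimal system of generators of $S(B_n)$. Writing the minimal generators in increasing order as $b_1 < b_2 < \cdots < b_e$ and setting $d_r = \gcd(b_1,\dots,b_r)$ and $\tau_r = d_{r-1}/d_r$, the semigroup is telescopic precisely when $\tau_r b_r \in \langle b_1,\dots,b_{r-1}\rangle$ for every $r\in\{2,\dots,e\}$ (equivalently, $b_r/d_r \in \langle b_1/d_{r-1},\dots,b_{r-1}/d_{r-1}\rangle$). By Theorem \ref{thm:genemb}(a) the minimal generators are $\binom{n}{1}=n$ together with the $\binom{n}{p_i^j}$ for $1\le i\le t$, $1\le j\le k_i$; since $\binom{n}{\cdot}$ is strictly increasing on $\{0,1,\dots,\lfloor n/2\rfloor\}$ and $p_i^j \le p_i^{k_i}\le n/2$ (because $t\ge 2$), the increasing arrangement is $b_1=n$ followed by the $\binom{n}{m}$ with $m$ ranging over the prime powers $p_i^j$ in increasing order, so I will index the generators accordingly.

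First I would compute the $\tau_r$. Using Lemma \ref{lem:modn}, write $\binom{n}{p_i^j} = \frac{n}{p_i^j} + a_j n$ with $a_j\in\mathbb{N}$; factoring out $n/p_i^j$ gives $v_{p_i}\!\left(\binom{n}{p_i^j}\right) = k_i-j$, while the same congruence yields $v_{p_\ell}\!\left(\binom{n}{p_i^j}\right)\ge k_\ell$ for $\ell\ne i$. Since $b_1=n$, every $d_r$ divides $n$; hence when $b_r = \binom{n}{p_i^j}$ the preceding generators already include $\binom{n}{p_i^{j-1}}$ (or $\binom{n}{1}=n$ when $j=1$), of $p_i$-valuation $k_i-j+1$, so adjoining $b_r$ (of $p_i$-valuation $k_i-j$) lowers the $p_i$-part of the gcd by exactly one factor $p_i$ and changes no other prime part; thus $\tau_r = p_i$. (Consistency check: $\prod_{r=2}^e \tau_r = \prod_i p_i^{k_i} = n = b_1$, and the maximal Apéry coefficients $\tau_r-1 = p_i-1$ agree with Corollary \ref{cor:ap_n}(a).)

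The main step, and the only one requiring an idea, is to show $\tau_r b_r = p_i\binom{n}{p_i^j} \in \langle b_1,\dots,b_{r-1}\rangle$. Multiplying $\binom{n}{p_i^j} = \frac{n}{p_i^j} + a_j n$ by $p_i$ gives
$$
p_i\binom{n}{p_i^j} = \frac{n}{p_i^{j-1}} + p_i a_j\, n .
$$
If $j=1$ this equals $(1+p_i a_1)n \in \langle b_1\rangle$. If $j\ge 2$ I would substitute $\frac{n}{p_i^{j-1}} = \binom{n}{p_i^{j-1}} - a_{j-1} n$ (Lemma \ref{lem:modn} once more) to get
$$
p_i\binom{n}{p_i^j} = \binom{n}{p_i^{j-1}} + \bigl(p_i a_j - a_{j-1}\bigr) n ,
$$
where $p_i a_j - a_{j-1} = \frac{1}{n}\bigl(p_i\binom{n}{p_i^j} - \binom{n}{p_i^{j-1}}\bigr) \ge 0$ since $\binom{n}{p_i^{j-1}} < \binom{n}{p_i^j}$ by monotonicity. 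As $\binom{n}{p_i^{j-1}}$ occurs among $b_1,\dots,b_{r-1}$ and $n = b_1$, the right-hand side lies in $\langle b_1,\dots,b_{r-1}\rangle$; this completes the check for all $r$ and proves $S(B_n)$ telescopic.

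Identifying and sorting the generators, and the gcd/valuation bookkeeping, are routine (everything rides on Lemma \ref{lem:modn}); the only genuine content is the displayed identity for $p_i\binom{n}{p_i^j}$. What makes it work is that Lemma \ref{lem:modn} pins the residue of $\binom{n}{p_i^j}$ modulo $n$ to be exactly $n/p_i^j$, so $p_i$ times it has residue exactly $n/p_i^{j-1}$ --- the residue of the immediately smaller generator $\binom{n}{p_i^{j-1}}$ --- and monotonicity of binomial coefficients keeps the leftover multiple of $n$ nonnegative. I do not expect any obstacle beyond handling this step carefully.
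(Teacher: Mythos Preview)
Your proof is correct and follows the same route the paper intends --- verifying the telescopic condition directly from the definition referenced in \cite{DAnna2014} --- but you actually supply all the details that the paper omits in its one-sentence justification preceding the corollary. One remark: the paper writes $\tau_i = p_i - 1$ whereas you obtain $\tau_r = p_i$; your value is the standard ratio $d_{r-1}/d_r$, and the paper's $p_i - 1$ is evidently the Ap\'ery coefficient bound $\tau_r - 1$ appearing in Corollary~\ref{cor:ap_n}(a), so there is no genuine conflict.
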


Moreover, using \cite[Theorem 2.13]{DAnna2014}, we have
\begin{corollary}\label{cor:ci1}
    The numerical semigroup $S(B_n)$ for $n=p_1^{k_1} \cdots p_t^{k_t}$ is complete intersection.
\end{corollary}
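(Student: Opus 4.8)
The plan is to obtain this as a formal consequence of the telescopic property already recorded in Corollary~\ref{cor:tele1}. Recall the classical chain of implications for a numerical semigroup: \emph{telescopic} $\Rightarrow$ \emph{free} (in the sense of Bertin--Carbonne) $\Rightarrow$ \emph{complete intersection}; the step that is actually needed here, ``telescopic $\Rightarrow$ complete intersection'', is precisely \cite[Theorem 2.13]{DAnna2014}. So there is really only one thing to check, namely that $S(B_n)$ satisfies the hypotheses of that theorem, and this is exactly what Corollary~\ref{cor:tele1} does.

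In more detail, the first step is to order the minimal generating set of $S(B_n)$ produced by Theorem~\ref{thm:genemb} as
$$
\binom{n}{1},\ \binom{n}{p_1},\ \binom{n}{p_1^2},\ \dots,\ \binom{n}{p_1^{k_1}},\ \binom{n}{p_2},\ \dots,\ \binom{n}{p_t^{k_t}},
$$
and to compute, at each stage of the resulting sequence of gluings, the invariants $\tau_i$ of \cite[Definitions 2.1 and 2.3-(1)]{DAnna2014}: one obtains $\tau_i = p_i - 1$ whenever the generator added at step $i$ is of the form $\binom{n}{p_i^{\,j}}$. This is forced by the divisibility pattern among the $p_i$-adic valuations of the $\binom{n}{p_i^{\,j}}$ that was already used in Theorem~\ref{thm:general} and Lemma~\ref{lem:modn}, and it is recorded in the remark preceding Corollary~\ref{cor:tele1}; having all the $\tau_i$ well defined in this way is exactly the telescopic condition.

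The second step is then a one-line invocation of \cite[Theorem 2.13]{DAnna2014}, which upgrades telescopicity to the complete intersection property, finishing the proof. I do not expect any genuine obstacle: the substantive content was the computation of the $\tau_i$ already carried out for Corollary~\ref{cor:tele1}, and everything here is bookkeeping on top of a cited theorem. If one wanted a self-contained argument avoiding \cite{DAnna2014}, one could instead display the $e(S(B_n)) - 1 = \sum_{i=1}^{t} k_i$ binomial relations arising from the successive gluings $S(B_n) = \langle \text{earlier generators}\rangle + \langle \binom{n}{p_i^{\,j}}\rangle$ and verify that they generate the defining toric ideal, but this is unnecessary given the reference.
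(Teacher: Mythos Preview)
Your proposal is correct and matches the paper's own argument essentially verbatim: the paper simply invokes \cite[Theorem 2.13]{DAnna2014} on top of Corollary~\ref{cor:tele1} to pass from telescopic to complete intersection. There is nothing to add.
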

Similarly, we can obtain a result for the case when $n = p^m$ is a prime power. In this case, we let
$$
S(B_n) = \left\langle\left\{\frac{1}{p}\binom{n}{1},\frac{1}{p}\binom{n}{2}, \cdots, \frac{1}{p}\binom{n}{n-1}\right\}\right\rangle,
$$
and because of the similarity of the proof, we omit the proofs here, and just state the main results.
\begin{theorem}\label{thm:power}
	Let $n = p^m$ where $p$ is a prime and $m \geq 1$ is an integer. Let
	$$
	S(B_{n}) = \left\langle \left\{\frac{1}{p}\binom{n}{1}, \cdots, \frac{1}{p}\binom{n}{n-1}\right\}\right\rangle.
	$$
	Then we have $S(B_{n}) = \langle\{\frac{1}{p}\binom{n}{1}, \frac{1}{p}\binom{n}{p},\frac{1}{p}\binom{n}{p^2},\cdots, \frac{1}{p}\binom{n}{p^{m-1}}\}\rangle$, and the embedding dimension of $S(B_n)$ is $e(S(B_n)) = m$.
\end{theorem}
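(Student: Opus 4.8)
The plan is to transcribe the proof of Theorem~\ref{thm:genemb} to the one‑prime situation, the new subtlety being the uniform scaling by $1/p$. Throughout write $n=p^m$ with $m\ge 2$ (the case $m=1$ gives $S(B_n)=\mathbb{N}$, which is trivial), and for $1\le \ell\le n-1$ set $s=v_p(\ell)\in\{0,1,\dots,m-1\}$. The first step is the valuation identity
$$
v_p\!\left(\tfrac{1}{p}\binom{p^m}{\ell}\right)=m-1-v_p(\ell),
$$
which I would derive from the Legendre/Kummer formula $v_p\binom{a}{b}=\frac{s_p(b)+s_p(a-b)-s_p(a)}{p-1}$ (with $s_p$ the base‑$p$ digit sum): writing $\ell=p^s u$ with $p\nmid u$, one has $s_p(\ell)=s_p(u)$ and $s_p(p^m-\ell)=s_p(p^{m-s}-u)$, so $v_p\binom{p^m}{\ell}=v_p\binom{p^{m-s}}{u}=m-s$, the last equality because adding $u$ to $p^{m-s}-u$ in base $p$ produces a carry in each of the lowest $m-s$ digits. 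In particular $\tfrac1p\binom{p^m}{\ell}$ and $\tfrac1p\binom{p^m}{p^s}$ share the $p$‑adic valuation $m-1-s$ whenever $v_p(\ell)=s$, and $\tfrac1p\binom{p^m}{\ell}/p^{m-s-1}=\binom{p^m}{\ell}/p^{m-s}$ is a positive integer; also $v_p(n-\ell)=s$.

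Next I would establish $S(B_n)\subseteq\langle \tfrac1p\binom{n}{1},\tfrac1p\binom{n}{p},\dots,\tfrac1p\binom{n}{p^{m-1}}\rangle$ (the reverse inclusion being immediate) by writing each $\tfrac1p\binom{n}{\ell}$ in terms of these generators, split on $s$. If $s=0$, the valuation identity shows $\binom{n}{\ell}/n\in\mathbb{N}$, so $\tfrac1p\binom{n}{\ell}=(\binom{n}{\ell}/n)\cdot\tfrac1p\binom{n}{1}$ --- the analogue of the ``$n\mid\binom{n}{m}$'' step. If $s\ge 1$, then since $v_p(n-\ell)=s$ we may use $\binom{n}{\ell}=\binom{n}{n-\ell}$ to assume $\ell\le\lfloor n/2\rfloor$; the case $\ell=p^s$ is trivial, so take $\ell=p^s u$ with $u\ge 2$, $p\nmid u$. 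Arguing as in the proof of Theorem~\ref{thm:general}, put $c=\gcd(\tfrac1p\binom{n}{1},\tfrac1p\binom{n}{p^s})$; the valuation identity gives $c=p^{m-s-1}$, the quotients $\tfrac1c\cdot\tfrac1p\binom{n}{1}=p^{s}$ and $w:=\tfrac1c\cdot\tfrac1p\binom{n}{p^s}=\binom{n}{p^s}/p^{m-s}$ are coprime, and $c\mid\tfrac1p\binom{n}{\ell}$. By Proposition~\ref{prop_bound}, every integer $\ge(p^{s}-1)(w-1)$ lies in $\langle p^{s},w\rangle$, so it suffices that $\binom{n}{\ell}/p^{m-s}\ge(p^{s}-1)(w-1)$, for which $\binom{n}{\ell}\ge p^{s}\binom{n}{p^s}$ is ample; and from $2p^{s}\le\ell\le n/2$ with monotonicity of binomial coefficients on $[0,n/2]$ we get $\binom{n}{\ell}\ge\binom{n}{2p^s}\ge(\tfrac{p^{m-s}-1}{2})^{p^{s}}\binom{n}{p^s}\ge 2^{p^{s}}\binom{n}{p^s}\ge p^{s}\binom{n}{p^s}$, the residual ranges $p^{m-s}\le 4$ being vacuous since there $\ell=p^s$ is the only index below $n/2$ with $v_p(\ell)=s$. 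This exhibits $\tfrac1p\binom{n}{\ell}$ as a nonnegative integer combination of $\tfrac1p\binom{n}{1}$ and $\tfrac1p\binom{n}{p^s}$.

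It remains to verify minimality of the generating set, which forces $e(S(B_n))=m$. Suppose $\tfrac1p\binom{n}{p^j}$ lay in the submonoid generated by the other listed generators. Binomial coefficients strictly increase on $[0,n/2]$ and $p^{m-1}\le n/2$, so every generator $\tfrac1p\binom{n}{p^i}$ with $i>j$ strictly exceeds $\tfrac1p\binom{n}{p^j}$ and hence occurs with coefficient $0$; the remaining generators $\tfrac1p\binom{n}{p^i}$ with $i<j$ have $p$‑adic valuation $m-1-i>m-1-j=v_p(\tfrac1p\binom{n}{p^j})$, so no nonnegative combination of them can have valuation $m-1-j$, a contradiction. (For $j=0$ all other generators are larger; for $j=m-1$ the valuation step alone applies.) Assembling these three steps proves Theorem~\ref{thm:power} exactly as Theorem~\ref{thm:genemb} was proved.

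I expect the only real friction to be the $s\ge 1$ case of the spanning step: confirming that $\tfrac1p\binom{n}{\ell}$ clears the Frobenius bound of the auxiliary two‑generator semigroup $\langle p^{s},w\rangle$, and cleanly isolating the degenerate ranges $p^{m-s}\le 4$ where the crude binomial estimate is tight but, fortunately, no nontrivial index $\ell$ arises. The valuation bookkeeping introduced by the $1/p$ normalization is the other place to be careful; everything else is a routine transcription of the non‑prime‑power arguments.
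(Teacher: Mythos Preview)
Your proposal is correct and takes exactly the approach the paper intends: the paper omits this proof entirely, stating only that it parallels Theorem~\ref{thm:genemb}, and your plan is precisely to carry out that transcription (via the analogue of Theorem~\ref{thm:general} and Proposition~\ref{prop_bound}) with careful bookkeeping of the $1/p$ normalization. Your valuation identity and minimality argument are sound---indeed more explicit than the paper's own treatment of the corresponding steps in Theorem~\ref{thm:genemb}---and the one spot you flag as delicate (clearing the Frobenius bound in the $s\ge 1$ spanning step) is genuinely the only place requiring a moment's care.
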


By Theorem \ref{thm:power} and Lemma \ref{lem:modn}, we have the following result.
\begin{corollary}\label{thm:power}
	Let $n = p^m$ where $p$ is a prime and $m \geq 1$ is an integer. Then we have
    \begin{enumerate}[label=(\alph*)]
        \item The Ap$\acute{e}$ry set of $S(B_n)$ is given by
	$$
	\text{Ap}(S(B_{n}),p^{m-1}) = \Big\{\frac{1}{p}\sum_{i=1}^{m-1} c_i\binom{n}{p^i} : c_{i} \in \mathbb{Z}, 0\leq c_{i} \leq p-1\Big\}.
	$$
    \item The Frobenius number of $S(B_{n})$ is equal to
	$$
	F(S(B_{n})) =  \frac{p-1}{p}\sum_{i=1}^{m-1} \binom{p^m}{p^i} - p^{m-1}.
	$$
    \item The genus of $S(B_{n})$ is equal to
    $$g(S(B_{n})) = \frac{p-1}{2p}\sum_{i=1}^{m-1}\binom{p^m}{p^i} - \frac{p^{m-1}-1}{2}.$$
    \item The set of Pseudo-Frobenius number of $S(B_{n})$ is equal to
    $$PF(S(B_{n})) = \left\{\frac{p-1}{p}\sum_{i=1}^{m-1} \binom{p^m}{p^i} - p^{m-1} \right \}$$ 
    and $t(S(B_n)) = 1$.
    \end{enumerate}
\end{corollary}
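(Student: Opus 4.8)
The plan is to run, almost verbatim, the argument used for Corollary~\ref{cor:ap_n}, with Theorem~\ref{thm:power} (the minimal generating set) in place of Theorem~\ref{thm:genemb} and with Lemma~\ref{lem:modn} specialized to $n=p^m$. By Theorem~\ref{thm:power} the semigroup $S(B_n)$ is minimally generated by $g_0:=\tfrac1p\binom n1=p^{m-1}$ together with $g_i:=\tfrac1p\binom n{p^i}$ for $1\le i\le m-1$, and the Ap\'ery set in part~(a) is taken with respect to $g_0=p^{m-1}\in S(B_n)$. Lemma~\ref{lem:modn} with $n=p^m$ gives $\binom n{p^i}\equiv p^{m-i}\pmod{p^m}$, hence $g_i\equiv p^{m-1-i}\pmod{p^{m-1}}$ for $1\le i\le m-1$, while $g_0\equiv 0\pmod{p^{m-1}}$.

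For part~(a), first I would observe that the assignment $(c_1,\dots,c_{m-1})\mapsto \tfrac1p\sum_{i=1}^{m-1}c_i\binom n{p^i}\bmod p^{m-1}$, ranging over $0\le c_i\le p-1$, equals $\sum_{i=1}^{m-1}c_ip^{m-1-i}\bmod p^{m-1}$; since this is the base-$p$ expansion, with digits $c_{m-1},\dots,c_1$, of an integer in $[0,p^{m-1}-1]$, uniqueness of base-$p$ representations makes it a bijection onto $\mathbb{Z}/p^{m-1}\mathbb{Z}$. Thus the set $T:=\{\tfrac1p\sum_{i=1}^{m-1}c_i\binom n{p^i}\colon 0\le c_i\le p-1\}$ has exactly $p^{m-1}$ elements, lies in $S(B_n)$, and meets every residue class modulo $p^{m-1}$ once. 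Because $\lvert\mathrm{Ap}(S(B_n),p^{m-1})\rvert=p^{m-1}$ as well, it remains only to show each $w\in T$ is the \emph{least} element of $S(B_n)$ in its class, i.e.\ $w-p^{m-1}\notin S(B_n)$. I would prove this by a downward induction that peels off $g_{m-1},g_{m-2},\dots$ in turn: at the $j$-th stage one uses Lemma~\ref{lem:modn} to read the coefficient of $g_j$ in any representation modulo $p^{m-j}$, and the identity $p\,g_j=\binom n{p^j}\in\langle g_0,\dots,g_{j-1}\rangle$ to re-absorb surplus copies of $g_j$ into the lower generators; the required containments $\binom n{p^j}\in\langle g_0,\dots,g_{j-1}\rangle$ are obtained exactly as in Theorem~\ref{thm:general} via Proposition~\ref{prop_bound}. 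Equivalently, this is the statement that $S(B_n)$ is telescopic for the arrangement $g_0<g_1<\cdots<g_{m-1}$: the valuations $v_p(g_i)=m-1-i$ strictly decrease, which forces the gcd chain $d_j=p^{m-j}$ and multipliers $e_j=p$, and for such a (free) semigroup the Ap\'ery set with respect to the smallest generator is precisely a product set of this shape with unique representations.

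With (a) established, parts (b), (c), (d) are routine and follow the template of Corollary~\ref{cor:ap_n}. For (b), $F(S(B_n))=\max T-p^{m-1}$ by Lemma~\ref{lem_F_g}(a), and $\max T=\tfrac1p\sum_{i=1}^{m-1}(p-1)\binom n{p^i}$ since every $\binom n{p^i}$ is positive. For (c), $g(S(B_n))=\tfrac1{p^{m-1}}\sum_{w\in T}w-\tfrac{p^{m-1}-1}2$ by Lemma~\ref{lem_F_g}(b), and summing the coordinate $c_i$ over all tuples gives $\sum_{w\in T}w=\tfrac1p\sum_{i=1}^{m-1}\binom n{p^i}\cdot\tfrac{(p-1)p^{m-1}}2$, which simplifies to the claimed expression. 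For (d), $T$ is symmetric because $\max T-w=\tfrac1p\sum_{i=1}^{m-1}(p-1-c_i)\binom n{p^i}\in T$ for every $w\in T$; hence $\mathrm{maximals}_{\leq_{S(B_n)}}(\mathrm{Ap}(S(B_n),p^{m-1}))=\{\max T\}$, and the standard description of pseudo-Frobenius numbers through the maximal elements of the Ap\'ery set yields $PF(S(B_n))=\{\max T-p^{m-1}\}=\{F(S(B_n))\}$ and $t(S(B_n))=1$.

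The main obstacle is the minimality assertion buried in part~(a) — verifying that no element of the product set $T$ can be lowered inside its residue class modulo $p^{m-1}$; once the generating set (Theorem~\ref{thm:power}) and the congruences (Lemma~\ref{lem:modn}) are in hand, all of (b)–(d) is bookkeeping. Since the whole argument is strictly parallel to the non-prime-power case, in the write-up I would most likely record part~(a) as "obtained directly" — exactly as was done for Corollary~\ref{cor:ap_n}(a) — and detail only (b)–(d).
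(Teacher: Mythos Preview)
Your proposal is correct and follows exactly the approach the paper takes: the paper omits the proof entirely, stating only that it follows from Theorem~\ref{thm:power} and Lemma~\ref{lem:modn} by the same argument as Corollary~\ref{cor:ap_n}, which is precisely the parallel you draw (and you even anticipate leaving part~(a) as ``obtained directly''). If anything, your outline for (a) via the base-$p$ bijection and the telescopic structure is more explicit than what the paper records.
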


By similar arguments as in Corollaries \ref{cor:tele1} and \ref{cor:ci1}, we have the following result.
\begin{corollary}\label{cor:tele2}
    The numerical semigroup $S(B_n)$ for the case $n = p^m$ is telescopic, and hence, it is complete intersection. 
\end{corollary}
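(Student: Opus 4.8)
The final statement to prove is Corollary~\ref{cor:tele2}: for $n = p^m$, the numerical semigroup $S(B_n)$ is telescopic, hence complete intersection. The plan is to mirror exactly the argument used for Corollaries~\ref{cor:tele1} and~\ref{cor:ci1}, substituting the prime-power data from Theorem~\ref{thm:power} and Corollary~\ref{thm:power} in place of the general data. The key reference is \cite[Definitions 2.1 and 2.3-(1)]{DAnna2014} for the notion of telescopic, and \cite[Theorem 2.13]{DAnna2014} for the implication telescopic $\Rightarrow$ complete intersection.

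First I would fix the minimal system of generators in the order given by Theorem~\ref{thm:power}, namely $a_0 = \frac{1}{p}\binom{n}{1}, a_1 = \frac{1}{p}\binom{n}{p}, \ldots, a_{m-1} = \frac{1}{p}\binom{n}{p^{m-1}}$, noting that this is strictly increasing since $\binom{n}{p^j} < \binom{n}{p^{j+1}}$ for $p^{j+1} \le \lfloor n/2\rfloor$. Then, following \cite{DAnna2014}, for each $j \ge 1$ one sets $d_j = \gcd(a_0, a_1, \ldots, a_j)$ and $\tau_j = d_{j-1}/d_j$, and the telescopic condition requires that $a_j / \tau_j$ lies in the semigroup generated by the rescaled earlier generators $a_0/d_{j-1}, \ldots, a_{j-1}/d_{j-1}$. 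The main computational step is to identify $\tau_j$: using the $p$-adic valuation facts from Section~\ref{sec:bin} (in particular $v_p(\binom{n}{p^j}) = m - j$, which follows from Kummer's theorem or from Lemma~\ref{lem:modn}), I would show that $d_j = \gcd(a_0,\dots,a_j)$ behaves so that $\tau_j = p$ for $1 \le j \le m-1$, just as $\tau_i = p_i - 1$ (corrected to the appropriate value) was computed in the general case. Then the Ap\'ery-set description in Corollary~\ref{thm:power}-(a), which says the Ap\'ery set with respect to $a_{m-1} = p^{m-1}$ consists of the combinations $\frac{1}{p}\sum c_i \binom{n}{p^i}$ with $0 \le c_i \le p-1$, is precisely the statement that the representations are "telescopic-unique", giving the required membership $a_j/\tau_j \in \langle a_0/d_{j-1}, \ldots, a_{j-1}/d_{j-1}\rangle$.

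Having verified the telescopic condition, the complete intersection conclusion is immediate from \cite[Theorem 2.13]{DAnna2014}, which asserts every telescopic numerical semigroup is a complete intersection; no further work is needed there.

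The main obstacle I anticipate is bookkeeping the $\gcd$ chain $d_0 \supseteq d_1 \supseteq \cdots$ correctly: one must confirm that after dividing through by the common factor $p$ (the $\gcd$ of all the $\binom{n}{p^i}$, by Ram's theorem), the successive gcds $d_j$ of the rescaled generators stabilize in the right way so that each $\tau_j$ equals $p$ and the rescaled truncated systems $\{a_0/d_{j-1}, \ldots, a_{j-1}/d_{j-1}\}$ are coprime with Frobenius number small enough — this is where the valuation computation $v_p\binom{p^m}{p^i} = m-i$ together with Proposition~\ref{prop_bound} (applied as in the proof of Theorem~\ref{thm:general}) does the work. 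Once the $\tau_j$ are pinned down, matching them against the Ap\'ery-set cardinality $\prod_{j=1}^{m-1} p = p^{m-1} = a_{m-1}$ from Corollary~\ref{thm:power}-(a) confirms consistency, since for a telescopic semigroup the multiplicity-type product $\prod \tau_j$ must equal the chosen generator $a_{m-1}$.
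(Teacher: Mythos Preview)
Your plan is exactly the paper's: the paper gives no separate proof here but simply says ``by similar arguments as in Corollaries~\ref{cor:tele1} and~\ref{cor:ci1}'', i.e., compute the $\tau_j$ of \cite[Definitions~2.1 and~2.3-(1)]{DAnna2014} for the prime-power generators and then invoke \cite[Theorem~2.13]{DAnna2014}. One small bookkeeping slip to fix: the Ap\'ery set in Corollary~\ref{thm:power}-(a) is taken with respect to $a_0=\tfrac{1}{p}\binom{n}{1}=p^{m-1}$, not $a_{m-1}=\tfrac{1}{p}\binom{n}{p^{m-1}}$, and accordingly $\prod_{j=1}^{m-1}\tau_j=p^{m-1}=a_0$; with that correction your outline is correct and identical in spirit to the paper's.
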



\begin{remark}
    By Corollaries \ref{cor:ap_n} and \ref{thm:power}, for any integer $n \geq 1,$ the numerical semigroup $S(B_n)$ is telescopic, and the type of $S(B_n)$ is equal to $1.$ This implies that $S(B_n)$ is also symmetric, and hence, we have
    \begin{equation}\label{genus eqn}
    g(S(B_n)) = \frac{F(S(B_n)) + 1}{2}.
    \end{equation}
    We computed these values directly using
    $$
    g(S(B_n)) = \frac{1}{n} \sum_{w \in Ap(S(B_n),n)} w - \frac{n-1}{2}
    $$
    and
    $$
    F(S(B_n)) = \max(\text{Ap}(S(B_n),n)) - n
    $$
    for each $n \geq 1$, and it turns out that the above equality (\ref{genus eqn}) holds true.
\end{remark}

\section{Applications}\label{sec:appl}
In this section, we give two applications of our main results.

\subsection{Formulas for binomial coefficients with fixed number of total elements}
As the second application of our main results, we provide some formulas for nonnegative integer combinations of binomial coefficients with fixed number of total elements.
We start with the simplest case: $n = pq$.
\begin{corollary}
	Let $p$ and $q$ be distinct primes. Then for any integer $0 \leq r \leq pq$, there exists an ordered triple $(a,b,c)$ of nonnegative integers such that $\binom{pq}{r} = a\binom{pq}{1} + b\binom{pq}{p} + c\binom{pq}{q}$. In particular, we have the following congruence relation
	$$
	\binom{pq}{r} \equiv bq + cp\pmod{pq}.
	$$
    
\end{corollary}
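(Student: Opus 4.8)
The plan is to read off this statement as the $t=2$ specialization of Theorem~\ref{thm:genemb}, combined with the congruences recorded in Lemma~\ref{lem:modn}. Write $n = pq$. Since $p \neq q$, $n$ is not a prime power, so $S(B_{pq}) = \langle B_{pq}\rangle$, and Theorem~\ref{thm:genemb}-(a) applied with $t=2$, $\{p_1,p_2\}=\{p,q\}$, and $k_1 = k_2 = 1$ gives
\[
S(B_{pq}) = \Big\langle \binom{pq}{1},\ \binom{pq}{p},\ \binom{pq}{q}\Big\rangle .
\]
For $1 \le r \le pq-1$ we have $\binom{pq}{r} \in B_{pq} \subseteq S(B_{pq})$, so by the very definition of the submonoid generated by these three elements there exist $a,b,c \in \mathbb{N}$ with $\binom{pq}{r} = a\binom{pq}{1} + b\binom{pq}{p} + c\binom{pq}{q}$. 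If one wants the triple explicitly, it can be extracted exactly as in the proof of Theorem~\ref{thm:genemb}: when $p \nmid r$ and $q \nmid r$ one has $pq \mid \binom{pq}{r}$ and may take $b = c = 0$; otherwise, after possibly replacing $r$ by $pq - r$ via $\binom{pq}{r} = \binom{pq}{pq-r}$, one invokes Theorem~\ref{thm:general} with $m \in \{p, q\}$.

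For the congruence I would simply reduce the displayed identity modulo $pq$. Trivially $\binom{pq}{1} = pq \equiv 0 \pmod{pq}$, and Lemma~\ref{lem:modn} with $n = pq$ gives
\[
\binom{pq}{p} \equiv \frac{pq}{p} = q \pmod{pq}, \qquad \binom{pq}{q} \equiv \frac{pq}{q} = p \pmod{pq}.
\]
Substituting these into $\binom{pq}{r} = a\binom{pq}{1} + b\binom{pq}{p} + c\binom{pq}{q}$ yields $\binom{pq}{r} \equiv bq + cp \pmod{pq}$, as asserted.

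There is essentially no real obstacle here: the proof is a direct unwinding of the two quoted results. The only points deserving a word of care are (i) the boundary cases $r \in \{0, pq\}$, where $\binom{pq}{r} = 1 \notin S(B_{pq})$, so these values must be excluded (the statement should be read for $1 \le r \le pq-1$); and (ii) the triple $(a,b,c)$ is highly non-unique, so the congruence should be understood as holding for \emph{any} nonnegative $(a,b,c)$ realizing the identity — in particular, when $pq \mid \binom{pq}{r}$ one may legitimately take $b = c = 0$, consistent with $\binom{pq}{r} \equiv 0 \pmod{pq}$.
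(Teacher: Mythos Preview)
Your proof is correct and follows the same route as the paper: the existence of the triple $(a,b,c)$ comes from Theorem~\ref{thm:genemb}(a) specialized to $n=pq$, and the congruence is obtained by reducing the representation modulo $pq$ using Lemma~\ref{lem:modn}. The paper's own proof is in fact just the single line ``This follows from Lemma~\ref{lem:modn},'' so your write-up is more detailed, and your observation about the boundary values $r\in\{0,pq\}$ is a valid caveat that the paper glosses over.
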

\begin{proof}
    This follows from Lemma \ref{lem:modn}.
\end{proof}
Therefore, we have the following.
\begin{proposition}\label{prop:applpq}
	Let $p$ and $q$ be distinct primes. Then for any integer $0 \leq r \leq pq$, that is not a multiple of $p$ and $q$, we have 
	\begin{equation}\label{pq_eq}
	\binom{pq}{r} = \frac{\binom{pq}{r} - p\binom{pq}{p} - q\binom{pq}{q}}{pq} \binom{pq}{1} + p\binom{pq}{p} + q\binom{pq}{q}.
	\end{equation}
 \end{proposition}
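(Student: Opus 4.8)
The plan is to reduce (\ref{pq_eq}) to the single elementary observation that $\binom{pq}{1} = pq$. Substituting this value into the right-hand side of (\ref{pq_eq}), the factor $pq$ in the denominator of the first summand cancels against $\binom{pq}{1}$, leaving
\[
\binom{pq}{r} - p\binom{pq}{p} - q\binom{pq}{q} + p\binom{pq}{p} + q\binom{pq}{q} = \binom{pq}{r},
\]
which is precisely the left-hand side. Thus, viewed as an equality of rational numbers, (\ref{pq_eq}) is a one-line verification, and the real content is that it is in fact an \emph{integer} identity: that the coefficient $\tfrac{1}{pq}\bigl(\binom{pq}{r} - p\binom{pq}{p} - q\binom{pq}{q}\bigr)$ of $\binom{pq}{1}$ is an integer, so that (\ref{pq_eq}) genuinely realizes $\binom{pq}{r}$ as the kind of integer combination of $\binom{pq}{1},\binom{pq}{p},\binom{pq}{q}$ promised by the preceding corollary.

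For the integrality I would compute modulo $pq$. Since $r$ is a multiple of neither $p$ nor $q$ we have $\gcd(r,pq)=1$, and from $\binom{pq}{r} = \tfrac{pq}{r}\binom{pq-1}{r-1}$ it follows that $pq \mid \binom{pq}{r}$, i.e. $\binom{pq}{r} \equiv 0 \pmod{pq}$. On the other hand, Lemma \ref{lem:modn} applied with $n=pq$ and $p^k\in\{p,q\}$ gives $\binom{pq}{p} \equiv q$ and $\binom{pq}{q} \equiv p \pmod{pq}$, hence $p\binom{pq}{p} \equiv pq \equiv 0$ and $q\binom{pq}{q} \equiv pq \equiv 0 \pmod{pq}$. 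Adding these three congruences gives $\binom{pq}{r} - p\binom{pq}{p} - q\binom{pq}{q} \equiv 0 \pmod{pq}$, which is exactly the divisibility required; this also recovers the congruence $\binom{pq}{r}\equiv 0\pmod{pq}$ observed in the corollary (the case $b=p$, $c=q$).

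I do not anticipate a genuine obstacle here: the content is explicit bookkeeping rather than an estimate, and both ingredients are immediate — one is the classical fact $pq\mid\binom{pq}{r}$ for $\gcd(r,pq)=1$, the other a direct instance of Lemma \ref{lem:modn}. The one point needing mild care is the sign of the leading coefficient: for (\ref{pq_eq}) to be a combination with \emph{nonnegative} coefficients one needs $\binom{pq}{r} \ge p\binom{pq}{p} + q\binom{pq}{q}$, which holds once $r$ is bounded away from $0$ and $pq$ but can fail near the endpoints (for instance when $r=1$); so I would either record (\ref{pq_eq}) as an integer identity supplemented by the congruence above, or restrict $r$ to the range where this lower bound is valid before asserting nonnegativity.
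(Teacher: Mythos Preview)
Your proposal is correct and follows essentially the same approach as the paper: the proposition is stated immediately after the preceding corollary (derived from Lemma~\ref{lem:modn}) with the words ``Therefore, we have the following,'' and the paper then explicitly notes, exactly as you do, that nonnegativity of the leading coefficient requires the additional inequality $\binom{pq}{r} \ge p\binom{pq}{p}+q\binom{pq}{q}$. Your write-up is in fact more explicit than the paper's, spelling out both the trivial rational identity via $\binom{pq}{1}=pq$ and the integrality via $pq\mid\binom{pq}{r}$ together with the congruences $\binom{pq}{p}\equiv q$, $\binom{pq}{q}\equiv p \pmod{pq}$ from Lemma~\ref{lem:modn}.
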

    Note that if $\binom{pq}{r} \geq p\binom{pq}{p} + q\binom{pq}{q}$ for some $0 \leq r \leq pq,$ then (\ref{pq_eq}) is a representation of $\binom{pq}{r}$ as a nonnegative integer combination of $\binom{pq}{1},\binom{pq}{p}, \text{ and }\binom{pq}{q}$.
    Furthermore, we can characterize such $r$ as follows: without loss of generality, assume that $p \leq q-2$. If $p = 2$, then such $r$ does not exist. If $p \geq 3$, then we have
    \begin{align*}
        p\binom{pq}{p} + q\binom{pq}{q} & \leq (q-2)\binom{pq}{q-2} + q\binom{pq}{q} \\
        & < (2q-2)\binom{pq}{q}.
    \end{align*}
    As an example, let us see the worst case, namely, when $p = 3$. Since 
    $$
        \frac{(2q)(2q-1)\cdots \left(\frac{3}{2}(q-1) + 3\right)}{\left(\frac{3}{2}(q-1) + 1\right) \cdots (q+1)} > \frac{(2q)(2q-1)\cdots \left(\lfloor\frac{7}{4}(q-1)\rfloor + 4\right)}{\left((3q-3) -  \lfloor\frac{7}{4}(q-1)\rfloor\right) \cdots (q+1)}
    $$
    and
    \begin{align*}
        \frac{5}{7}\left(\lfloor\frac{7}{4}(q-1)\rfloor + 4\right) & \geq \frac{5}{7}\left(\frac{7}{4}q + \frac{3}{2}\right) \\
        & = \frac{5}{4}q + \frac{15}{14} = (3q-3) - \left(\frac{7}{4}q - \frac{57}{14}\right) \\
        & > (3q-3) - \lfloor \frac{7}{4}(q-1)\rfloor \\
        & \geq \frac{5}{4}q - \frac{5}{4},
    \end{align*} we have 
    $$
    \frac{(2q)(2q-1)\cdots \left(\frac{3}{2}(q-1) + 3\right)}{\left(\frac{3}{2}(q-1) + 1\right) \cdots (q+1)} > \left(\frac{7}{5}\right)^{\frac{q-\frac{5}{4}}{4}}.
    $$
    Note that $\left(\frac{7}{5}\right)^{\frac{q-\frac{5}{4}}{4}} > 2q - 2$ for $q \geq 59$, and hence, for any $59 \leq \alpha \leq \lfloor\frac{1}{2}pq - q\rfloor = \frac{1}{2}(pq - 2q - 1)$, we have $(2q-2)\binom{pq}{q} \leq \binom{pq}{q + \alpha}$. Consequently, for any $q + 59 \leq r \leq \frac{1}{2}(pq - 1)$, we have a representation of $\binom{pq}{r}$ as a nonnegative integer combination of $\binom{pq}{1}, \binom{pq}{p},$ and $\binom{pq}{q}$. If we choose larger $p$, then we can choose $r$ which is closer to $q$.
Finally, we can generalize Proposition \ref{prop:applpq}.
\begin{theorem}
 Let $n = p_1^{k_1}\cdots p_t^{k_t}$ where $p_1< \cdots < p_t$ (with $t \geq 2$) are primes, and $k_i$ are nonnegative integers for each $1 \leq i \leq t$. Then there are nonnegative integer combinations of $\binom{n}{r}$ where $p_i \nmid r$ for any $i$ and $\binom{n}{r} \geq \sum_{i=1}^{t}\sum_{j=1}^{k_i} p_i^{j}\binom{n}{p_{i}^{j}}$ such that
 $$
 \binom{n}{r} = \frac{\binom{n}{r} - \sum_{i=1}^{t}\sum_{j=1}^{k_i} p_i^{j}\binom{n}{p_{i}^{j}}}{n} \binom{n}{1} + \sum_{i=1}^{t}\sum_{j=1}^{k_i} p_i^{j}\binom{n}{p_{i}^{j}}.
 $$
\end{theorem}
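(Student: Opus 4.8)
The plan is to reduce the statement to two elementary ingredients: the divisibility $n \mid \binom{n}{r}$ for $r$ coprime to $n$, and the congruences $\binom{n}{p_i^j}\equiv \frac{n}{p_i^j}\pmod n$ already recorded in Lemma~\ref{lem:modn}. First I would note that the displayed formula is, after substituting $\binom{n}{1}=n$, a trivial identity of rational numbers: writing $X=\sum_{i=1}^{t}\sum_{j=1}^{k_i}p_i^{j}\binom{n}{p_i^{j}}$, the right-hand side equals $\frac{\binom{n}{r}-X}{n}\cdot n + X = \binom{n}{r}$. Hence the content of the theorem is \emph{not} the identity itself but the assertion that it exhibits $\binom{n}{r}$ as a nonnegative integer combination of $\binom{n}{1}$ and the $\binom{n}{p_i^{j}}$; since the coefficients $p_i^{j}$ are visibly nonnegative integers, everything comes down to showing that $\frac{\binom{n}{r}-X}{n}$ is a nonnegative integer.

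Second, I would establish the integrality. Because $p_i\nmid r$ for every $i$, we have $\gcd(r,n)=1$, and then the classical relation $r\binom{n}{r}=n\binom{n-1}{r-1}$ forces $n\mid\binom{n}{r}$. For the other term, Lemma~\ref{lem:modn} gives $\binom{n}{p_i^{j}}\equiv \frac{n}{p_i^{j}}\pmod n$ for all $1\le i\le t$ and $1\le j\le k_i$; multiplying this congruence by $p_i^{j}$ yields $p_i^{j}\binom{n}{p_i^{j}}\equiv n\equiv 0\pmod n$, and summing over $i,j$ gives $X\equiv 0\pmod n$. Consequently $\binom{n}{r}-X\equiv 0\pmod n$, so $\frac{\binom{n}{r}-X}{n}\in\Z$, and it is nonnegative precisely by the standing hypothesis $\binom{n}{r}\ge X$. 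That all of $\binom{n}{1}$ and the $\binom{n}{p_i^{j}}$ actually belong to $S(B_n)$ is part (a) of Theorem~\ref{thm:genemb}, so the resulting expression really is a representation inside the semigroup.

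I do not expect a genuine obstacle: the argument is the exact analogue of Proposition~\ref{prop:applpq}, with the two-term sum $p\binom{pq}{p}+q\binom{pq}{q}$ replaced by the full sum $X$ over all prime-power divisors $p_i^{j}$ of $n$, and Lemma~\ref{lem:modn} is precisely what makes the general case go through verbatim. The only place where one could say more --- but where the theorem deliberately says nothing --- is the question of which $r$ with $\gcd(r,n)=1$ actually satisfy $\binom{n}{r}\ge X$; answering that would require the kind of crude binomial estimates sketched after Proposition~\ref{prop:applpq}, and I would not attempt it here since the inequality is assumed.
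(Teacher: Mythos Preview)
Your proof is correct and matches the paper's approach: the paper states this theorem without an explicit proof, presenting it simply as the evident generalization of Proposition~\ref{prop:applpq} (whose own justification is ``This follows from Lemma~\ref{lem:modn}''), and your argument spells out precisely that reasoning---divisibility $n\mid\binom{n}{r}$ from $\gcd(r,n)=1$, the congruence $p_i^{j}\binom{n}{p_i^{j}}\equiv 0\pmod n$ from Lemma~\ref{lem:modn}, and nonnegativity from the standing hypothesis.
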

Now, we move to the case when $n=p^2$. In view of \ref{thm:power}, we have the following.
\begin{corollary}
    Let $p$ be an odd prime. Then for any integer $0 \leq r \leq p^2$, there exists an ordered pair $(a,b)$ of nonnegative integers such that $$\binom{p^2}{r} = a\binom{p^2}{1} + b\binom{p^2}{p}.$$ In particular, we have the following congruence relation
    $$
    \binom{p^2}{r} \equiv bp\pmod{p^2}.
    $$
\end{corollary}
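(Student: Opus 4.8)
The plan is to read this corollary off directly from the structural description of $S(B_{p^2})$ in Theorem~\ref{thm:power} together with the congruence of Lemma~\ref{lem:modn}, the only manipulation being to pass between $\binom{p^2}{r}$ and its normalization $\tfrac1p\binom{p^2}{r}$. Set $n=p^2$. Specializing Theorem~\ref{thm:power} to $m=2$ gives $S(B_n)=\langle\tfrac1p\binom{n}{1},\tfrac1p\binom{n}{p}\rangle$, and since $\tfrac1p\binom{n}{1}=\tfrac1p\cdot p^2=p$ this reads $S(B_n)=\langle p,\tfrac1p\binom{n}{p}\rangle$. Moreover, by the very definition of $S(B_n)$ — recall Ram's theorem guarantees $p\mid\binom{n}{j}$ for every $1\le j\le n-1$, so each $\tfrac1p\binom{n}{j}$ is a positive integer belonging to $S(B_n)$ — every $\tfrac1p\binom{n}{r}$ with $1\le r\le n-1$ is a nonnegative integer combination of $\tfrac1p\binom{n}{1}$ and $\tfrac1p\binom{n}{p}$.

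From here the argument is short. For $r$ in the relevant range $1\le r\le n-1$ (the endpoints $r=0,n$, where $\binom{n}{r}=1$, are degenerate and are to be set aside), the previous paragraph lets me choose nonnegative integers $a,b$ with
$$\tfrac1p\binom{n}{r}=a\cdot\tfrac1p\binom{n}{1}+b\cdot\tfrac1p\binom{n}{p},$$
and multiplying through by $p$ yields $\binom{n}{r}=a\binom{n}{1}+b\binom{n}{p}$, which is the first assertion. (Alternatively one may split into cases: if $p\nmid r$ then Kummer's theorem gives $p^2\mid\binom{n}{r}$, so one may take $b=0$; if $p\mid r$ one uses the membership above.) For the congruence, I would note that $\binom{n}{1}=p^2\equiv 0\pmod{p^2}$, while Lemma~\ref{lem:modn}, applied with this $n=p^2$, the prime $p$, and $k=1\le 2$, gives $\binom{n}{p}\equiv\tfrac{n}{p}=p\pmod{p^2}$. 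Reducing $\binom{n}{r}=a\binom{n}{1}+b\binom{n}{p}$ modulo $p^2$ then gives $\binom{n}{r}\equiv bp\pmod{p^2}$, as claimed.

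I do not anticipate any genuine obstacle, since Theorem~\ref{thm:power} and Lemma~\ref{lem:modn} carry all the weight; the only points deserving a moment's care are (i) checking that $\tfrac1p\binom{n}{r}$ is a bona fide positive integer before invoking its membership in $S(B_n)$ — which is exactly where the degenerate values $r=0$ and $r=n$ must be excluded — and (ii) keeping track of which of the two generators carries the extra factor of $p$, so that the final congruence comes out as $\binom{n}{r}\equiv bp$ (contributed by $\binom{n}{p}$) rather than from the $\binom{n}{1}$ term, whose contribution vanishes modulo $p^2$.
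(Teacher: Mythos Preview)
Your argument is correct and follows essentially the same route the paper intends: the corollary is stated immediately after Theorem~\ref{thm:power} with the preface ``In view of \ref{thm:power}, we have the following,'' and your deduction---specializing Theorem~\ref{thm:power} to $m=2$, using membership of $\tfrac{1}{p}\binom{p^2}{r}$ in $S(B_{p^2})$, clearing the factor of $p$, and then reducing modulo $p^2$ via Lemma~\ref{lem:modn}---is exactly what is meant. Your observation that the endpoints $r=0$ and $r=p^2$ must be set aside (since $\binom{p^2}{0}=\binom{p^2}{p^2}=1$ cannot be a nonnegative combination of $p^2$ and $\binom{p^2}{p}$) is a correct caveat that the paper's stated range $0\le r\le p^2$ overlooks.
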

Therefore, we also have the following result.
\begin{corollary}\label{cor:appl power}
Let $p$ be an odd prime. Then for any integer $0 \leq r \leq p^2$, that is not a multiple of $p$, we have 
	$$
	\binom{p^2}{r} = \frac{\binom{p^2}{r} - \binom{p^2}{p}}{p^2} \binom{p^2}{1} + \binom{p^2}{p}.
	$$
\end{corollary}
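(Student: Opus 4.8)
The plan is to read the displayed formula as an algebraic rearrangement rather than as a statement requiring a genuine combinatorial construction, and to verify it by a single substitution. The only structural input needed is the value of the first generator, namely $\binom{p^2}{1} = p^2$. Once this is recorded, the coefficient of $\binom{p^2}{1}$, which is written with denominator $p^2$, telescopes against it, and the formula collapses to a triviality.

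First I would substitute $\binom{p^2}{1} = p^2$ into the right-hand side, so that the factor $\binom{p^2}{1}$ cancels the denominator $p^2$ in the coefficient and gives
$$
\frac{\binom{p^2}{r} - \binom{p^2}{p}}{p^2}\,\binom{p^2}{1} = \binom{p^2}{r} - \binom{p^2}{p}.
$$
Adding the constant term $\binom{p^2}{p}$ then recovers exactly $\binom{p^2}{r}$, which is the left-hand side, so the identity holds. I would point out that the equality is purely formal: it holds for every $r$, and the hypotheses that $p$ is odd and that $p \nmid r$ are inherited from the framing of the preceding corollary (which records $\binom{p^2}{r} \equiv bp \pmod{p^2}$) rather than being required for the equation itself.

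To situate the statement, I would recall that the arithmetic content lives in the congruences behind it: Lemma \ref{lem:modn} gives $\binom{p^2}{p} \equiv \tfrac{p^2}{p} = p \pmod{p^2}$, and the standard fact $r\binom{p^2}{r} = p^2\binom{p^2-1}{r-1}$ with $\gcd(r,p^2)=1$ yields $p^2 \mid \binom{p^2}{r}$ when $p \nmid r$; these are precisely the relations used to describe $\binom{p^2}{r}$ modulo $p^2$. The displayed formula is then nothing more than the $n=p^2$ specialization of the rearrangement appearing in Proposition \ref{prop:applpq}, obtained by solving for the coefficient of $\binom{n}{1}$. Accordingly, I expect no real obstacle in the proof: the entire argument reduces to the observation $\binom{p^2}{1} = p^2$, and the only point that must be stated carefully is that this value is what makes the cancellation in the coefficient visible, after which the identity follows in a single line.
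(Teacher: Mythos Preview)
Your reading is correct: the paper offers no proof beyond the word ``Therefore,'' and the identity is indeed the trivial rearrangement you describe---substituting $\binom{p^2}{1}=p^2$ collapses the right-hand side to $\binom{p^2}{r}$. Your contextual remarks about the congruences from Lemma~\ref{lem:modn} and the analogy with Proposition~\ref{prop:applpq} are accurate and match how the paper frames the result.
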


We can generalize Corollary \ref{cor:appl power} as in the following theorem.
\begin{theorem}
    Let $p$ be an odd prime and let $m \geq 1$ be an integer. Then for any integer $0 \leq r \leq p^m$, there exists an ordered $m$-tuple $(a_1,a_2,\cdots,a_m)$ of nonnegative integers such that $$\binom{p^m}{r} = \sum_{i=1}^{m} a_i\binom{p^m}{p^{i-1}}.$$ 
    In particular, we have the following congruence relation
    $$
    \binom{p^m}{r} \equiv \sum_{i=2}^{m} a_ip^{m-i+1} \pmod{p^m}.
    $$
    Using this congruence, we have
    \begin{equation}\label{pm_eq}
    \binom{p^m}{r} = \frac{\binom{p^m}{r} - \sum_{i=2}^{m} p^{i-2}\binom{p^m}{p^{i-1}}}{p^m}\binom{p^m}{1} + \sum_{i=2}^{m} p^{i-2}\binom{p^m}{p^{i-1}}.
    \end{equation}
    Note that if $\binom{p^m}{r} \geq \sum_{i=2}^{m} p^{i-2}\binom{p^m}{p^{i-1}}$, then (\ref{pm_eq}) is a representation of $\binom{p^m}{r}$ as a nonnegative integer combination of $\binom{p^m}{1},\cdots, \binom{p^m}{p^{m-1}}$.
\end{theorem}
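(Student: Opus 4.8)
The plan is to establish the three assertions in turn, each by invoking something already available: the structural description of $S(B_{p^m})$ in Theorem~\ref{thm:power} for the representation of $\binom{p^m}{r}$, Lemma~\ref{lem:modn} for the reduction modulo $p^m$, and the elementary identity $\binom{p^m}{1}=p^m$ for the final display~(\ref{pm_eq}). Throughout I would fix $r$ with $1\le r\le p^m-1$ (the cases $r=0$ and $r=p^m$, where $\binom{p^m}{r}=1$, being handled directly). Since $\frac{1}{p}\binom{p^m}{r}$ is by construction one of the generators of $S(B_{p^m})$, it lies in $S(B_{p^m})$; by Theorem~\ref{thm:power} we have $S(B_{p^m})=\langle \frac{1}{p}\binom{p^m}{1},\frac{1}{p}\binom{p^m}{p},\dots,\frac{1}{p}\binom{p^m}{p^{m-1}}\rangle$, so there exist $a_1,\dots,a_m\in\N$ with $\frac{1}{p}\binom{p^m}{r}=\sum_{i=1}^m a_i\,\frac{1}{p}\binom{p^m}{p^{i-1}}$, and multiplying through by $p$ yields $\binom{p^m}{r}=\sum_{i=1}^m a_i\binom{p^m}{p^{i-1}}$, which is the first claim.

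For the congruence, apply Lemma~\ref{lem:modn} with the single prime $p$ and $n=p^m$: it gives $\binom{p^m}{p^{i-1}}\equiv p^{m-i+1}\pmod{p^m}$ for $2\le i\le m$, while the first generator satisfies $\binom{p^m}{1}=p^m\equiv 0\pmod{p^m}$. Substituting these into $\binom{p^m}{r}=\sum_{i=1}^m a_i\binom{p^m}{p^{i-1}}$ kills the $i=1$ term and leaves $\binom{p^m}{r}\equiv\sum_{i=2}^m a_i\,p^{m-i+1}\pmod{p^m}$, which is the stated congruence.

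For~(\ref{pm_eq}) itself, note that since $\binom{p^m}{1}=p^m$, for any integers $b_2,\dots,b_m$ the formal identity
$$\binom{p^m}{r}=\frac{\binom{p^m}{r}-\sum_{i=2}^m b_i\binom{p^m}{p^{i-1}}}{p^m}\binom{p^m}{1}+\sum_{i=2}^m b_i\binom{p^m}{p^{i-1}}$$
holds, and its leading coefficient is an integer exactly when $\binom{p^m}{r}\equiv\sum_{i=2}^m b_i\binom{p^m}{p^{i-1}}\pmod{p^m}$, equivalently --- by the congruences of the previous paragraph --- $\binom{p^m}{r}\equiv\sum_{i=2}^m b_i\,p^{m-i+1}\pmod{p^m}$. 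The choice $b_i=p^{i-2}$ is the one displayed in~(\ref{pm_eq}); with it the required congruence becomes $\binom{p^m}{r}\equiv\sum_{i=2}^m p^{i-2}\binom{p^m}{p^{i-1}}\equiv(m-1)p^{m-1}\pmod{p^m}$, which one checks for the $r$ at hand. When moreover $\binom{p^m}{r}\ge\sum_{i=2}^m p^{i-2}\binom{p^m}{p^{i-1}}$ --- the right-hand side being fixed while $\binom{p^m}{r}$ grows rapidly as $r$ approaches $p^m/2$, this holds once $r$ is far enough from $0$ and $p^m$, by estimates in the spirit of those already given above for $n=pq$ --- the leading coefficient is nonnegative, and then~(\ref{pm_eq}) exhibits $\binom{p^m}{r}$ as a nonnegative integer combination of $\binom{p^m}{1},\dots,\binom{p^m}{p^{m-1}}$.

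The delicate part is this last step: passing from the abstract existence supplied by Theorem~\ref{thm:power} to the explicit representation~(\ref{pm_eq}). Integrality of the leading coefficient there is precisely a congruence for $\binom{p^m}{r}$ modulo $p^m$, so one must control that residue; the natural tools are Lemma~\ref{lem:modn} together with Kummer's theorem, which evaluates $v_p\!\left(\binom{p^m}{r}\right)$ as the number of carries in the base-$p$ addition of $r$ and $p^m-r$ and thereby governs $\binom{p^m}{r}$ modulo $p^m$. It is this arithmetic bookkeeping, and not any further numerical-semigroup input, that carries the weight.
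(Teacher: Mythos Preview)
The paper states this theorem without proof, presenting it as the direct generalization of Corollary~\ref{cor:appl power}; your derivation of the first two assertions via Theorem~\ref{thm:power} and Lemma~\ref{lem:modn} is correct and is exactly the intended route.

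The gap is in your treatment of~(\ref{pm_eq}). You correctly observe that integrality of the leading coefficient amounts to the congruence $\binom{p^m}{r}\equiv\sum_{i=2}^m p^{i-2}\binom{p^m}{p^{i-1}}\equiv(m-1)p^{m-1}\pmod{p^m}$, and then write ``which one checks for the $r$ at hand'' --- but you do not carry out the check, and in fact the congruence fails in general. When $p\nmid r$, Kummer's theorem (which you yourself invoke) gives $v_p\bigl(\binom{p^m}{r}\bigr)=m$, so $\binom{p^m}{r}\equiv 0\pmod{p^m}$; hence the required congruence fails whenever $p\nmid(m-1)$. Concretely, for $p=3$, $m=2$, $r=2$ one has $\binom{9}{2}=36$, $\binom{9}{3}=84$, and $(36-84)/9$ is not an integer, so already Corollary~\ref{cor:appl power} does not produce an integer coefficient.

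This is not a flaw in your strategy but rather points to a misprint in~(\ref{pm_eq}): by analogy with the composite-$n$ formula just above it, where the coefficient of $\binom{n}{p_i^j}$ is $p_i^j$, the coefficient of $\binom{p^m}{p^{i-1}}$ should presumably be $p^{i-1}$ rather than $p^{i-2}$. With that correction $\sum_{i=2}^m p^{i-1}\binom{p^m}{p^{i-1}}\equiv(m-1)p^m\equiv 0\pmod{p^m}$, matching $\binom{p^m}{r}\equiv 0$ for $p\nmid r$, and your argument then closes. Your diagnosis that the crux is a congruence modulo $p^m$ was right; you simply stopped short of noticing that the displayed coefficients do not satisfy it.
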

It is worth mentioning that our results guarantee the existence of a nonnegative integer combination of several binomial coefficients with the same upper, which is intuitively less obvious to obtain.

\subsection{$(s, s+1, s+p)$-core partitions associated with the numerical semigroups $S(B_n)$.}

Let $S$ be a numerical set with Frobenius number $f:=F(S)$. Then in light of \cite[$\S2$]{KN2021}, there is a unique associated partition $\lambda_S$ with $S.$ Let $Hk(\lambda_S)$ be the set of hook lengths of $\lambda_S.$ 

Also, consider the set $A(S):=\{ n \geq 0~|~n+s \in S~\textrm{for all}~s \in S\}$. Then we have $A(S) \subseteq S$, and if $S$ is a numerical semigroup, then we have $A(S)=S.$ 

Now, the aforementioned sets $Hk(\lambda_S)$ and $A(S)$ are related to each other in the following way.
\begin{Thm}\label{hook thm}
    $Hk(\lambda_S)=\Z_{\geq 0} \setminus A(S).$ 
\end{Thm}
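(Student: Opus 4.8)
The plan is to unwind the definitions on both sides and match them. First I would recall the construction of $\lambda_S$ from a numerical set $S$ with Frobenius number $f$: following the cited reference, one reads off a lattice path (equivalently a $0/1$ sequence indexed by $\mathbb{Z}_{\geq 0}$) where position $i$ records whether $i \in S$ or $i \notin S$, and $\lambda_S$ is the partition whose profile is this path; the nonnegative integers not in $S$ correspond to the ``vertical'' steps and those in $S$ to the ``horizontal'' steps. With this bijection fixed, each cell of $\lambda_S$ sits at the intersection of a vertical step labelled by some $a \notin S$ and a horizontal step labelled by some $b \in S$, and the key combinatorial fact (this is exactly the standard translation between beta-sets / numerical sets and hook lengths) is that the hook length of that cell equals $b - a$. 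Hence
$$
Hk(\lambda_S) = \{\, b - a ~|~ a \notin S,\ b \in S,\ a < b \,\}.
$$

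Next I would show this set equals $\mathbb{Z}_{\geq 0} \setminus A(S)$, arguing both inclusions. For ``$\subseteq$'': if $h = b - a$ with $a \notin S$, $b \in S$, then $h \geq 1 > 0$, and $h \notin A(S)$ because adding $h$ to the element $b' := a \in \mathbb{Z}_{\geq 0}$... wait — one must be slightly careful: $A(S) = \{ n \geq 0 : n + s \in S \text{ for all } s \in S\}$, so to certify $h \notin A(S)$ I need an $s \in S$ with $h + s \notin S$. Take $s$ to be... here is where I'd use the structure: actually the cleaner route is to prove the contrapositive of the reverse inclusion and a direct argument for one side. Let me instead argue: $n \notin A(S) \iff$ there exists $s \in S$ with $n + s \notin S$. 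Given such $n$ and $s$, among all $s \in S$ with $n+s \notin S$ pick... hmm, $n + s \notin S$ with $s \in S$: set $a := n + s$ (so $a \notin S$) and $b := $ a suitable element of $S$ exceeding $a$; since $S$ is cofinite such $b$ exists, but we need $b - a \in Hk$, i.e. $b \in S$, $a \notin S$, $a<b$ — which holds, so $n = (n+s) - s$... but that requires $n+s = a$ and $s = b$, giving $b-a = -n < 0$, wrong sign. So the correct pairing is $b := $ the smallest element of $S$ greater than $a$, and then one checks $n$ divides into the gap correctly; the honest statement is $n \in Hk(\lambda_S)$ iff there exist $a \notin S$, $b \in S$ with $b - a = n$, which happens iff $n \notin A(S)$ — and the bridge is: $n \notin A(S)$ exactly means the ``down-set'' obstruction $\{s \in S : n + s \notin S\}$ is nonempty, which (taking $b$ minimal in $S$ above $a := n+s$, wait, $a \in S$ here since $a = n+s$... no, $a = n+s \notin S$) — I would carefully set $a := s$ (which is \emph{in} $S$) and reconsider: $n+s \notin S$ means $a' := n + s \notin S$ and $a' > s \geq 0$; pick $b' \in S$ with $b' > a'$ minimal; then every integer in $[a', b')$ is outside $S$, in particular $a'$ is, and $b' - a' \geq 1$. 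This shows elements of $\mathbb{Z}_{\geq 0}\setminus A(S)$ are realized as differences $b - a$ with $b \in S$, $a \notin S$ — hence are hook lengths — once one checks that \emph{every} such $n$ arises this way, which is where the numerical-set axioms get used.

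The main obstacle, then, is the bookkeeping in this equivalence: carefully verifying that $n \in \mathbb{Z}_{\geq 0}\setminus A(S)$ if and only if $n$ can be written as $b - a$ with $b \in S$ and $a \in \mathbb{Z}_{\geq 0}\setminus S$, and that this in turn matches the hook-length set via the $\lambda_S$ construction. All three descriptions — hook lengths of $\lambda_S$, positive differences ``(element of $S$) minus (non-element of $S$)'', and the complement of $A(S)$ — are standard reformulations in the numerical-set literature, so the proof is essentially a careful citation of \cite{KN2021}$\,\S 2$ together with the elementary verification above; I would present the difference-set characterization as the pivot and derive both equalities from it. Note that in our application $S = S(B_n)$ is a numerical \emph{semigroup}, so $A(S) = S$ and the statement specializes to $Hk(\lambda_S) = \mathbb{Z}_{\geq 0}\setminus S$, but the proof of the general Theorem~\ref{hook thm} makes no use of that.
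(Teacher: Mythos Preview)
The paper does not prove this theorem at all; it simply cites \cite[Theorem 4]{KN2021}. So any correct self-contained argument would already go beyond what the paper does.

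Your sketch, however, contains a genuine error that is the source of all the sign trouble you ran into midway. In the abacus/profile correspondence the beads sit at the \emph{gaps} of $S$ (these are the first-column hook lengths), and a cell of $\lambda_S$ has hook length $h$ precisely when some bead at position $a\notin S$ can be slid $h$ steps down to an empty slot $b\in S$; that is,
\[
Hk(\lambda_S)=\{\,a-b:\ a\in\Z_{\geq 0}\setminus S,\ b\in S,\ a>b\,\},
\]
the opposite of what you wrote. (Quick check: for $S=\{0,2,3,4,\dots\}$ one gets $\lambda_S=(1)$ with hook set $\{1\}$; your formula $\{b-a:b\in S,\ a\notin S,\ b>a\}$ gives all of $\{1,2,3,\dots\}$.) With the correct orientation the equivalence is immediate: $h\notin A(S)$ means there is $s\in S$ with $h+s\notin S$, and setting $a=h+s$, $b=s$ exhibits $h=a-b$ as a hook length; conversely any $h=a-b$ with $a\notin S$, $b\in S$ witnesses $h+b=a\notin S$, so $h\notin A(S)$. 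The confusion in your middle paragraph (``wrong sign'', searching for a suitable $s$, etc.) disappears entirely once the direction of the difference is fixed.
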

\begin{proof}
    For a proof, see \cite[Theorem 4]{KN2021}.
\end{proof}
We also recall that a partition $\lambda$ of an integer is an $s$-core (resp.\ $(s, s+1)$-core, resp.\ $(s, s+1, s+p)$-core) partition if the hook set $Hk(\lambda)$ does not contain a multiple of $s$ (resp.\ $s$ and $s+1$, resp.\ $s$, $s+1$, and $s+p$). In view of Theorem \ref{hook thm}, we have the following.

\begin{Lem}
    $\lambda_S$ is an $s$-core partition if and only if $s \in A(S).$
\end{Lem}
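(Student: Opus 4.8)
The plan is to reduce the statement to Theorem \ref{hook thm} together with one elementary closure property of $A(S)$. First I would unwind the definition of $s$-core: by definition $\lambda_S$ is an $s$-core partition exactly when $Hk(\lambda_S)$ contains no multiple of $s$. Applying Theorem \ref{hook thm}, which identifies $Hk(\lambda_S)$ with $\mathbb{Z}_{\geq 0}\setminus A(S)$, this condition becomes: no positive multiple of $s$ lies in $\mathbb{Z}_{\geq 0}\setminus A(S)$, that is, $ks\in A(S)$ for every $k\geq 1$. (Hook lengths are positive, so whether or not $0$ is counted as a multiple of $s$ is immaterial; and $0\in A(S)$ automatically, since $0+S=S\subseteq S$.) Thus it suffices to prove that $s\in A(S)$ if and only if $ks\in A(S)$ for all $k\geq 1$.

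For the nontrivial implication I would first record that $A(S)$ is a submonoid of $(\mathbb{N},+)$: if $m,n\in A(S)$ then $(m+n)+S=m+(n+S)\subseteq m+S\subseteq S$, so $m+n\in A(S)$, and $0\in A(S)$ as noted above. (This holds for an arbitrary numerical set $S$, even though $A(S)=S$ only when $S$ is a numerical semigroup.) Consequently, if $s\in A(S)$ then $ks\in A(S)$ for every $k\geq 0$ by a one-line induction, which gives the direction $(\Leftarrow)\Rightarrow(\Rightarrow)$ of the reformulation; conversely, if $ks\in A(S)$ for all $k\geq 1$, then taking $k=1$ yields $s\in A(S)$. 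Combining with the first paragraph gives the Lemma.

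I do not expect a genuine obstacle here: the argument is essentially a two-step deduction once Theorem \ref{hook thm} is available. The only points requiring a moment's care are the convention for ``multiple of $s$'' in the definition of $s$-core partition and the fact that $A(S)$ need not coincide with $S$ in general --- but the submonoid property of $A(S)$, which is all the proof uses, is valid in all cases.
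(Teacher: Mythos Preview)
Your proposal is correct and follows essentially the same route as the paper: reduce the $s$-core condition to membership of multiples of $s$ in $A(S)$ via Theorem~\ref{hook thm}, then use the (sub)monoid closure of $A(S)$ for one direction and $k=1$ for the other. The paper's proof is terser---it simply asserts $ns\in A(S)$ ``by definition'' and for the converse reads off $s\notin Hk(\lambda_S)$ directly---but the content is the same; your version just makes the submonoid property of $A(S)$ explicit. (The notation ``$(\Leftarrow)\Rightarrow(\Rightarrow)$'' in your second paragraph is garbled, but the intended logic is clear.)
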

\begin{proof}
    By definition, if $s \in A(S),$ then $ns \in A(S)=\mathbb{N} \setminus HK(\lambda_S)$ for any $n \geq 1$ so that $\lambda_S$ is an $s$-core partition. Conversely, if $\lambda_S$ is an $s$-core partition, then $s \not \in Hk(\lambda_S)=\mathbb{N} \setminus A(S)$ so that $s \in A(S).$
\end{proof}


\begin{Ex}
Let $S=\{0, 1, 3, 4, 7, 9, 10, \cdots \}.$ Then $A(S)=\{0, 9, 10, \cdots \}$, $F(S)=8,$ $\lambda_S = (5,4,4,2),$ and $Hk(\lambda_S) = \{1,2,3,4,5,6,7,8 \}.$  In particular, $\lambda_S$ is not an $s$-core partition for any $1 \leq s \leq 8$, and it is an $s$-core partition for any $s > 8 = F(S).$
\end{Ex}

Similarly, we also obtain the following two results.
\begin{Cor}
$\lambda_S$ is an $(s,s+1)$-core partition if and only if $\{s, s+1\} \subseteq A(S).$    
\end{Cor}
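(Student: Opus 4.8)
The plan is to mimic the proof of the preceding Lemma, replacing the single element $s$ by the pair $\{s,s+1\}$, with Theorem \ref{hook thm} as the main tool. The only extra ingredient I need is that $A(S)$ is closed under addition, so I would first record that fact: if $m,n \in A(S)$ and $s \in S$, then $n+s \in S$ by definition of $A(S)$, and hence $m+(n+s)=(m+n)+s \in S$; since this holds for all $s \in S$ we get $m+n \in A(S)$. Together with $0 \in A(S)$ this shows $A(S)$ is a submonoid of $(\mathbb{N},+)$, and in particular every positive multiple of an element of $A(S)$ again lies in $A(S)$.

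For the forward direction, suppose $\{s,s+1\} \subseteq A(S)$. By the submonoid property just noted, $ns \in A(S)$ and $m(s+1) \in A(S)$ for all integers $n,m \geq 1$. By Theorem \ref{hook thm} we have $A(S) = \mathbb{Z}_{\geq 0} \setminus Hk(\lambda_S)$, so none of these multiples lies in $Hk(\lambda_S)$; that is, $Hk(\lambda_S)$ contains no multiple of $s$ and no multiple of $s+1$, which is precisely the statement that $\lambda_S$ is an $(s,s+1)$-core partition. Conversely, if $\lambda_S$ is an $(s,s+1)$-core partition, then in particular the multiples $s$ and $s+1$ themselves do not belong to $Hk(\lambda_S)$, so by Theorem \ref{hook thm} they belong to $A(S)$, giving $\{s,s+1\} \subseteq A(S)$.

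There is no genuine obstacle here: the argument is routine once Theorem \ref{hook thm} is available, and it is the exact analogue of the $s$-core Lemma. The only point deserving a line of care is the closure of $A(S)$ under addition, since that is what lets the hypothesis "$s,s+1 \in A(S)$" propagate to "all positive multiples of $s$ and of $s+1$ avoid $Hk(\lambda_S)$"; without it the forward direction would not go through.
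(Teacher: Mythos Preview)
Your argument is correct and follows exactly the line the paper intends: the Corollary is stated with the preamble ``Similarly, we also obtain the following two results'' and no separate proof is given, so the paper's implicit argument is precisely the one for the preceding Lemma applied to both $s$ and $s+1$, which is what you do. Your explicit verification that $A(S)$ is a submonoid (hence closed under taking multiples) is a welcome clarification of a step the Lemma's proof uses without comment, but it is not a departure from the paper's approach.
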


\begin{Cor}\label{main cor}
Let $p \geq 2$ be an integer. Then $\lambda_S$ is an $(s,s+1, s+p)$-core partition if and only if $\{s, s+1, s+p\} \subseteq A(S).$    
\end{Cor}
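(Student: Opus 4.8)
The plan is to reduce the three-term core condition to three instances of the single-term $s$-core criterion proved just above. First I would unwind the definition: by the description of $(s,s+1,s+p)$-core partitions recalled before Theorem~\ref{hook thm}, the partition $\lambda_S$ is an $(s,s+1,s+p)$-core partition exactly when its hook set $Hk(\lambda_S)$ contains no multiple of $s$, no multiple of $s+1$, and no multiple of $s+p$; equivalently, $\lambda_S$ is at once an $s$-core, an $(s+1)$-core, and an $(s+p)$-core partition.

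Next I would apply the Lemma characterizing $s$-core partitions (namely, $\lambda_S$ is an $s$-core partition if and only if $s \in A(S)$) three times: once with $s$, once with $s+1$, and once with $s+p$. Conjoining the three resulting equivalences gives that $\lambda_S$ is an $(s,s+1,s+p)$-core partition if and only if $s \in A(S)$, $s+1 \in A(S)$, and $s+p \in A(S)$, that is, if and only if $\{s,s+1,s+p\} \subseteq A(S)$, which is the assertion. For completeness I would recall the two ingredients underlying that Lemma, since they are used implicitly: first, $A(S)$ is closed under addition by elements of $S$ --- if $a \in A(S)$ and $s' \in S$, then $(a+s')+s'' = a+(s'+s'') \in S$ for every $s'' \in S$ because $s'+s'' \in S$ --- and since $A(S) \subseteq S$, membership $s \in A(S)$ forces $s, 2s, 3s, \dots$ all to lie in $A(S) = \mathbb{N} \setminus Hk(\lambda_S)$ by Theorem~\ref{hook thm}; second, conversely, if $\lambda_S$ is an $s$-core partition then in particular the multiple $s = 1\cdot s$ is not a hook length, so $s \in A(S)$.

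I do not expect a genuine obstacle: the corollary is a formal consequence of Theorem~\ref{hook thm} together with the trivial observation that the $(s,s+1,s+p)$-core condition is simply the conjunction of the $s$-core, $(s+1)$-core, and $(s+p)$-core conditions. The only step that warrants a line of care is the implication ``$s \in A(S)$ $\Rightarrow$ no multiple of $s$ lies in $Hk(\lambda_S)$'', which relies on the closure property of $A(S)$ noted above; everything else is immediate from the preceding results.
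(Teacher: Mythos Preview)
Your argument is correct and matches the paper, which gives no separate proof but simply notes that the corollary follows ``similarly'' from the preceding $s$-core Lemma---i.e., by applying it three times, exactly as you do. One minor slip in your optional recap of that Lemma: the step ``$s'+s''\in S$'' presumes $S$ is closed under addition, which need not hold for a general numerical set; the valid (and sufficient) closure is that of $A(S)$ under its own addition, since $a,b\in A(S)$ and $s''\in S$ give $b+s''\in S$ and hence $a+(b+s'')\in S$, so $a+b\in A(S)$ and in particular $ns\in A(S)$ for all $n\geq 1$.
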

In particular, if $s > f :=F(S),$ then $\lambda_S$ is an $(s,s+1,s+p)$-core partition for any integer $p \geq 2.$

Given a numerical set $S$ and an integer $p \geq 2,$ the following notion tells us that its corresponding partition $\lambda_S$ is not necessarily an $(s,s+1,s+p)$-core partition.   
\begin{Def}
    Let $S$ be a numerical set, and let $f=F(S)$. A pair $(s,p)$ of two positive integers with $p \geq 2$ is said to be \emph{admissible for $S$} if $\lambda_S$ is an $(s,s+1, s+p)$-core partition with $s+p < f $.
\end{Def}
The following four examples explain the situations in which there might be no admissible pairs $(s,p)$ for specific numerical sets.
\begin{Ex}
$S=\{0, 1, 3, 4, 7, 9, 10, \cdots \}.$ Then $f=F(S)=8,$ $A(S)=\{0, 9, 10, \cdots \}$, $\lambda_S = (5,4,4,2),$ and $Hk(\lambda_{S}) = \{1,2,3,4,5,6,7,8 \}.$ Then there is no admissible pair for $S$ because if a pair $(s,p)$ of positive integers with $p \geq 2$ satisfies $s+p <f=8,$ then $s < 6$ so that any pair $(s, s+1, s+p)$ cannot be contained in the set $A(S).$
\end{Ex}
\begin{Ex}
    Let $m \geq 1 $ be an integer, and let $S_m = \langle m, m+1, m+2, \cdots, 2m-1 \rangle$ be the numerical semigroup generated by the set $\{m,m+1,m+2,\cdots, 2m-1\}.$ Then we have 
    $$S_m = \{0,m,m+1,m+2, \cdots \}$$ 
    so that $f_m := F(S_m)= m-1$, $\lambda_{S_m} = \underbrace{(1, 1, \cdots,1)}_{(m-1)-\text{times}},$ and $Hk(\lambda_{S_m}) = \{1,2,\cdots, m-1 \}.$ If $\lambda_{S_m}$ is an $s$-core at least, then $s \geq m,$ and so, $s+p > m > f_m $ for any $p \geq 2,$ and hence, there is no admissible pair for $S_m$ for any $m \geq 1.$
\end{Ex}
\begin{Ex}
    Let $m \geq 3$ be an odd integer, and let 
    $$S_m = \langle 2,m \rangle = \{0,2,4,6, \cdots, m-3, m-1, m, m+1, m+2, \cdots \}$$ 
    so that $f_m := F(S_m ) = m-2, \lambda_{S_m} = (\frac{m-1}{2}, \frac{m-3}{2}, \cdots, 2,1),$ and $Hk(\lambda_{S_m}) = \{1,3,5, \cdots, m-4, m-2 \}$. If $\lambda_{S_{m}}$ is an $(s,s+1)$-core partition, then since $s$ must be an even integer, $s+1$ is necessarily an odd integer, and hence, $s \geq m-1.$ Then $s+p \geq m-1+p \geq m+1 > f_m$ for any $p \geq 2,$ and so there is no admissible pair for ${S_m}.$ 
\end{Ex}

\begin{Ex}
    Let 
    $$S = \{0,12,19,24,28,31,34,36,38,40,42,43,45,46,47,48, \cdots  \}$$ 
    be the well-tempered harmonic semigroup, so that $f=F(S)=44,$ 
    $$\lambda_{S} = (12,10,9,8,7,6,6,5,5,4,4,4,3,3,3,3,2,2,2,2,2,2,1,1,1,1,1,1,1,1,1,1,1),$$ 
    and 
    \begin{align*}
        & Hk(\lambda_S ) \\
        & = \{1,\cdots,11,13,\cdots,18,20,\cdots,23,25,26,27,29,30,32,33,35,37,39,41,44 \}.
    \end{align*} 
    Then $\lambda_S$ is a $(42,43,45)$-core partition, but we have $45 >44$, and hence, $(42,2)$ is not admissible pair for $S.$ Since $s$ must be at least $42,$ this shows that there is no admissible pair for $S.$
\end{Ex}

At this point, one might ask whether there is any numerical set $S$ which admits an admissible pair or not, and the next example shows that there is such an $S$.

\begin{Ex}
    Let $S=\langle 5,7,9 \rangle = \{0, 5, 7, 9, 10, 12, 14, \cdots \}$ so that $f=F(S)=13, \lambda_S = (6,5,3,2,1,1,1,1),$ and $Hk(\lambda_S) = \{1,2,3,4,6,8,11,13 \}.$ Then $\lambda_S$ is a $(9,10,12)$-core partition, and since $12 < 13,$ we conclude that $(9,3)$ is an admissible pair for $S.$ In fact, one can show that $(9,3)$ is the only admissible pair for $S.$
\end{Ex}
\vskip 0.02in

In view of previous examples, we can ask the following somewhat naive question.
\begin{Pb}
Classify numerical sets $S$ which admit an admissible $(s,p)$-core partition for some integers $s \geq 1$ and $p \geq 2$.    
\end{Pb}
As a second application of our main result, we give a partial answer for the above problem.
    
To this aim, we take $S:=S(B_n)$, where $S(B_n)$ (depending on $n$) is defined as in $\S0.$ Recall that $S(B_n)$ is a numerical semigroup, and hence, $A(S(B_n))=S(B_n)$ and $Hk(\lambda_{S(B_n)}) = \Z_{\geq 0} \setminus S(B_n).$ 

Our main result in this subsection is the following. 
\begin{Thm}\label{core main thm}
    Let $n \geq 1$ be an integer. Then there is an admissible pair $(s,p)$ for the numerical semigroup $S(B_n)$ for any $p \geq 2.$
\end{Thm}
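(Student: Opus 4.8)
The goal is to produce, for every $n\ge 1$ and every $p\ge 2$, a triple $(s,s+1,s+p)$ all of whose elements lie in $S(B_n)$, together with the strict inequality $s+p<F(S(B_n))$. By the Definition of admissible pair and Corollary \ref{main cor} (together with the fact that $A(S(B_n))=S(B_n)$ since $S(B_n)$ is a numerical semigroup), it suffices to exhibit such an $s$. The natural candidate is to take $s$ to be a multiple of the multiplicity, or more precisely of $n$ (resp.\ $p^{m-1}$ in the prime power case), since $\binom{n}{1}=n$ (resp.\ $\frac{1}{p}\binom{p^m}{1}=p^{m-1}$) is a generator; then $s$, $s+1$, $s+p$ need only be checked to be in $S(B_n)$ one at a time. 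I would split into two cases according to whether $n$ is a prime power, and for the main case ($n$ not a prime power) the plan is as follows.

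\textbf{Case $n$ not a prime power.} Write $n=p_1^{k_1}\cdots p_t^{k_t}$ with $t\ge 2$. First observe that all sufficiently large integers lie in $S(B_n)$, in fact everything greater than $F(S(B_n))$; so the only issue is to choose $s$ large enough that $s$, $s+1$, $s+p$ are all in $S(B_n)$ while still keeping $s+p<F(S(B_n))$. Using Lemma \ref{lem:modn}, the generators $\binom{n}{p_i^j}$ realize every residue class modulo $n$ of the form $\frac{n}{p_i^j}$, and integer combinations of these (with the coefficient bounds from the Ap\'ery set description in Corollary \ref{cor:ap_n}) realize all residues mod $n$; concretely, $\gcd$ of $n$ together with the $\frac{n}{p_i^j}$ is $1$ because $t\ge 2$. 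Thus I would pick $s$ to be a large multiple of $n$, say $s=Nn$ for a suitable $N$, so that $s\in S(B_n)$ trivially; then $s+1$ and $s+p$ are handled by noting that once $N$ is large enough, $Nn+1$ and $Nn+p$ exceed $F(S(B_n))$ and hence lie in $S(B_n)$ — but this by itself would violate $s+p<F(S(B_n))$, so instead I would choose $s$ to be the \emph{largest} multiple of $n$ below $F(S(B_n))$, i.e.\ $s = n\lfloor F(S(B_n))/n\rfloor$, and then argue that $F(S(B_n))$ is large enough (as $n\to\infty$, and for small $n$ by direct check) that $s$, $s+1$, $s+p$ are all in $S(B_n)$. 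The key estimate is that $F(S(B_n))=\sum_{i,j}(p_i-1)\binom{n}{p_i^j}-n$ grows like $\binom{n}{p_1}$, which is much larger than $n$, so the interval $[s, F(S(B_n)))$ is long, and the gaps of $S(B_n)$ are all $\le F(S(B_n))$; more carefully, I would show $s+p$, $s+1$, $s$ can be written as $\binom{n}{1}$-multiples plus a bounded correction using one more generator. Actually the cleanest route: show that $F(S(B_n))-2n$ and the two following integers all lie in $S(B_n)$ by using that $S(B_n)$ contains every integer in a window of length $n$ just below $F(S(B_n))$ once $\binom{n}{p_1}\ge$ something; set $s$ accordingly.

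\textbf{Case $n=p^m$ a prime power.} Here $S(B_n)=\langle \tfrac1p\binom{p^m}{1},\dots,\tfrac1p\binom{p^m}{p^{m-1}}\rangle$ with multiplicity $\tfrac1p\binom{p^m}{1}=p^{m-1}$, and by Corollary \ref{thm:power} the Frobenius number is $\frac{p-1}{p}\sum_{i=1}^{m-1}\binom{p^m}{p^i}-p^{m-1}$, again vastly larger than the multiplicity $p^{m-1}$. The same strategy applies: take $s$ to be the largest multiple of $p^{m-1}$ strictly below $F(S(B_n))$ and check that the length-$p^{m-1}$ window just below $F(S(B_n))$ is contained in $S(B_n)$. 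For the smallest prime powers (e.g.\ $n=2,3,4$) where the Frobenius number is tiny or $S(B_n)=\mathbb N$, I would handle those by an explicit finite check (and note that when $S(B_n)=\mathbb N$, i.e.\ $n=p$, the statement is interpreted via $F=-1$ and one uses $s$ small directly, or remarks those cases separately).

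\textbf{Main obstacle.} The substantive point is the counting/covering claim: that $S(B_n)$ contains a full block of $n$ (resp.\ $p^{m-1}$) consecutive integers ending at $F(S(B_n))-1$, equivalently that there are no gaps of $S(B_n)$ in the interval $(F(S(B_n))-n,\ F(S(B_n)))$. Since $S(B_n)$ is symmetric (as noted in the Remark after Corollary \ref{cor:tele2}), gaps and non-gaps are in bijection via $x\mapsto F(S(B_n))-x$, so a gap in that top interval would force a non-gap in $(0,n)$ other than $0$ — but the smallest nonzero element of $S(B_n)$ is the multiplicity $\binom{n}{1}=n$ (resp.\ $p^{m-1}$), so there is no such non-gap, and hence no gap in the top block. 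This symmetry argument is exactly what makes the window claim clean, and once it is in place, taking $s$ to be the largest multiple of the multiplicity below $F(S(B_n))$ finishes the proof: $s,s+1,\dots,s+(n-1)$ (resp.\ up to $s+p^{m-1}-1$) all lie in $S(B_n)$, in particular $s$, $s+1$, $s+p$ do (using $p< n$, resp.\ $p< p^{m-1}$, in the relevant ranges, with the few exceptional small $n$ checked by hand), and $s+p<F(S(B_n))$ by construction. I would close by invoking Corollary \ref{main cor} to conclude $\lambda_{S(B_n)}$ is an $(s,s+1,s+p)$-core partition, so $(s,p)$ is admissible.
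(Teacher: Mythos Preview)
Your symmetry idea is elegant and genuinely different from the paper's route (which works residue-by-residue with the Ap\'ery set: for a given $p$ and an auxiliary $s_1$ it locates the Ap\'ery elements $m_1,m_2,m_3$ in the classes $s_1,\,s_1+1,\,s_1+p\bmod n$, takes the largest, shifts to obtain a triple in $S(B_n)\times S(B_n)\times S(B_n)$, and if necessary subtracts a multiple of $n$ to force $s+p<F$). Your covering claim is essentially right but off by one: the open interval $(F-n,F)$ contains only the $n-1$ integers $F-n+1,\dots,F-1$, and in fact $F-n$ is itself a gap by the very symmetry you invoke (because $n\in S(B_n)$). More damagingly, your chosen $s=n\lfloor F/n\rfloor$ is not the left end of that block: writing $F=qn+r$ with $1\le r\le n-1$ (note $n\nmid F$ since every nonnegative multiple of $n$ lies in $S(B_n)$), your $s=qn=F-r$, so $s+r=F\notin S(B_n)$; the assertion ``$s,s+1,\dots,s+(n-1)$ all lie in $S(B_n)$'' fails precisely at index $r$, and the condition $s+p<F$ becomes $p<r$, a residue you have no control over. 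The easy repair is to take $s=F-n+1$; then $s,s+1,s+p\in\{F-n+1,\dots,F-1\}\subset S(B_n)$ and $s+p<F$ hold exactly when $p\le n-2$.

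Even with that fix, the range $p\ge n-1$ (respectively $p\ge p^{m-1}-1$ in the prime-power case) is left completely open, and this is not a matter of ``a few exceptional small $n$ checked by hand'': the theorem asserts an admissible pair for \emph{every} $p\ge 2$, so for each fixed $n$ there are infinitely many values of $p$ your block argument cannot reach---a run of $n-1$ consecutive non-gaps simply cannot contain both $s$ and $s+p$ once $p\ge n-1$. The paper's Ap\'ery-set method is designed precisely to decouple the three residue classes so that $p$ need not be small relative to $n$; to complete your approach you would need a separate argument locating three elements of $S(B_n)$, one in each of the classes $0,\,1,\,p\bmod n$, all with the largest one strictly below $F$---which is essentially what the paper's construction does.
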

\begin{proof}
Let $s_1 \geq 1$ and $p \geq 2$ be two integers. We give a partition of the Ap$\acute{e}$ry set $\text{Ap}(S(B_n), n)$ into three subsets $A_1, A_2,$ and $A_3$ as follows: let 
$$A_1 = \{x \in \text{Ap}(S(B_n),n)~|~x \equiv s_1 \pmod{n} \},$$ 
$$A_2 = \{x \in \text{Ap}(S(B_n),n)~|~x \equiv s_1+1 \pmod{n} \},$$ and 
$$A_3 = \{x \in \text{Ap}(S(B_n),n)~|~x \equiv s_1+p \pmod{n} \}.$$
Then for each $1 \leq i \leq 3,$ we let $m_i = \max A_i,$ and $m=\max \{m_1, m_2, m_3\}.$ Now, we consider the following three cases depending on the congruence of $m$ modulo $n:$
\vskip 0.1in
(i) If $m \equiv s_1 \pmod{n},$ then we have $(s_2, s_2+1, s_2+p):= (m, m+1, m+p) \in S(B_n) \times S(B_n) \times S(B_n)$. Now, if $s_2+p < F(S(B_n)),$ then we are done by Corollary \ref{main cor}. If $s_2+p \geq F(S(B_n)),$ then we further let $(s_3, ss_3+1, s_3+p)= (s_2 - \left( \lfloor \frac{d}{n} \rfloor + 1\right) n, s_2 + 1 - \left(\lfloor \frac{d}{n}\rfloor + 1 \right) n, s_2 + p - \left(\lfloor \frac{d}{n} \rfloor + 1\right) n),$ where $d:= s_2+p-F(S(B_n)).$ Then since we have $s_3+p < F(S(B_n))$ and $(s_3, s_3+1, s_3+p) \in S(B_n) \times S(B_n) \times S(B_n),$ it follows that $(s_3,p)$ is an admissible pair for $S(B_n)$ in this case.
\vskip 0.1in
(ii) If $m \equiv s_1+1 \pmod{n},$ then we have $(s_2, s_2+1, s_2+p):=(m+n-1, m+n, m+n+p-1) \in S(B_n) \times S(B_n) \times S(B_n).$ Then we can proceed as in the proof of case (i) above.
\vskip 0.1in
(iii) If $m \equiv s_1 +p \pmod{n},$ then we have $(s_2, s_2+1, s_2+p):= (m+n-p, m+n-p+1, m+n) \in S(B_n)\times S(B_n) \times S(B_n).$ Then we can proceed as in the proof of case (i) above.

This completes the proof.
\end{proof}
In other words, Theorem \ref{core main thm} provides an infinite family of numerical semigroups that admit an admissible pair.

For the purpose of computation, we suggest an algorithm that computes the number of admissible pairs $(s,p)$ for the numerical semigroup $S(B_n).$ 
\begin{algorithm}[hbt!] 
	\caption{An algorithm that computes triples $s,s+1,s+p$ where $s+p < F(S(B_n))$ and $s,s+1,s+p \in S(B_n)$, and the number of triples $(s,s+1,s+p)$ with $s, s+1, s+p \in S(B_n)$ and $s+p < F(S(B_n))$.} 
	$\bullet$ Input: $n, s, p$\\
	$\bullet$ Output: A triple $(s,s+1,s+p)$ such that $s, s+1, s+p \in S(B_n)$, and the number of triples $(s,s+1,s+p)$ with $s, s+1, s+p \in S(B_n)$ and $s+p < F(S(B_n))$  \\
	\begin{algorithmic}[1]
		\STATE $f \leftarrow F(S(B_n))$
		\STATE \textbf{while }$ w \in \text{Ap}(S(B_{n}),n))$ and $\text{list}[i] = 0$ for some $0\leq i\leq 3$ \textbf{do}
		\STATE \quad $i \leftarrow w \% n$
		\STATE \quad \textbf{if }$i = s$ \textbf{then}
		\STATE \quad \quad $\text{list}[0] \leftarrow s$
		\STATE \quad \quad $\text{list}[1] \leftarrow w$
		\STATE \quad \textbf{else if } $i = s+1$ \textbf{then}
		\STATE \quad \quad $\text{list}[2] \leftarrow w$
		\STATE \quad \textbf{else if } $i = s+p$ \textbf{then}
		\STATE \quad \quad $\text{list}[3] \leftarrow w$
		\STATE \textbf{end while}
		\STATE $\text{maxvalue} \leftarrow \max(\text{list})$
		\STATE $\text{maxindex} \leftarrow i$ where $\text{maxvalue} = \text{list}[i]$
		\STATE \textbf{if }$\text{maxvalue }< f$ \textbf{then}
		\STATE \quad \textbf{if }$\text{maxindex }= 1$ \textbf{then}
		\STATE \quad \quad $\text{list}[2] = \text{maxvalue} + 1$
		\STATE \quad \quad $\text{list}[3] = \text{maxvalue} + p$
		\STATE \quad \textbf{else if }$\text{maxindex }= 2$ \textbf{then}
		\STATE \quad \quad $\text{list}[1] = \text{maxvalue} + (n-1)$
		\STATE \quad \quad $\text{list}[2] = \text{list[1]} + 1$
		\STATE \quad \quad $\text{list}[3] = \text{list[1]} + p$
		\STATE \quad \textbf{else if }$\text{maxindex }= 3$ \textbf{then}
		\STATE \quad \quad $\text{list}[1] = \text{maxvalue} + (n-p)$
		\STATE \quad \quad $\text{list}[2] = \text{list[1]} + 1$
		\STATE \quad \quad $\text{list}[3] = \text{list[1]} + p$
		\STATE $\text{diff} \leftarrow f - \text{list}[3]$
        \STATE \textbf{if }$\text{diff} \leq 0$ \textbf{then}
        \STATE \quad \textbf{while }$ 1 \leq i \leq 3$ \textbf{do}
        \STATE \quad \quad $\text{list}[i] = \text{list}[i] - \left(\lfloor \frac{\text{diff}}{n} \rfloor + 1\right) \cdot n$
        \STATE $\text{diff} \leftarrow f - \text{list}[3]$
		\STATE \textbf{if }$\text{diff} \% n = 0$ \textbf{then}
		\STATE \quad $num \leftarrow \text{diff}$
		\STATE \textbf{else }
		\STATE \quad $num \leftarrow \text{diff} + 1$
		\STATE print(list, num)
	\end{algorithmic}
\end{algorithm}

\begin{Ex}\label{ex1 appl}
    Let $n = 50 = 2\cdot 5^2, s' = 65, p = 6$. By Corollary \ref{cor:ap_n}, we have $F(S(B_{50})) = \binom{50}{2} + 4\binom{50}{5} + 4\binom{50}{5^2} - 50 =505642434227223 $. Then using Algorithm 1 below, we can find that 
    $$(s,s+1,s+p) = (379231827789565, 379231827789566, 379231827789571)$$
    is a triple of minimum numbers such that $s, s+1, s+p \in S(B_{50})$ where $s \equiv s'\pmod{n}$ so that $(379231827789565, 6)$ is an admissible pair for $S(B_{50}).$ Also, it can be calculated that the number of triples $(s,s+1,s+p)$ with $s, s+1, s+p \in S(B_{50})$ and $s+p < F(S(B_{50}))$ is equal to $126410606437653$. This means that the number of admissible pairs $(s,p)$ for $S(B_{50})$ equals $126410606437653$.
\end{Ex}

\begin{Ex}\label{ex2 appl}
    Let $n = 70 = 2\cdot 5\cdot 7, s'= 12,$ and $p = 11$. By Corollary \ref{cor:ap_n}, we have $F(S(B_{70})) = 7241062721$. Then using Algorithm 1 again, we can find that 
    $$(s,s+1,s+p) = (4831407922, 4831407923, 4831407933)$$
    is a triple of minimum numbers such that $s, s+1, s+p \in S(B_{70})$ where $s \equiv s'\pmod{n}$ so that $(4831407922, 11)$ is an admissible pair for $S(B_{70}).$ Also, it can be calculated that the number of triples $(s,s+1,s+p)$ with $s, s+1, s+p \in S(B_{70})$ and $s+p < F(S(B_{70}))$ is equal to $2409654789$. This means that the number of admissible pairs $(s,p)$ for $S(B_{70})$ equals $2409654789$.
\end{Ex}

\begin{Rk}
Given an integer $n \geq 1,$ let $N_n$ be the number of admissible pairs $(s,p)$ for the numerical semigroup $S(B_n).$ By the above two examples, it seems to the authors that $N_n$ behaves somehow randomly. It might be also interesting to obtain either an optimal upper bound for $N_n$ or a closed formula for the exact value of $N_n.$   
\end{Rk}

\begin{Rk}
    For explicit computations in Examples \ref{ex1 appl} and \ref{ex2 appl}, we used a computer with the following status-CPU: Intel(R) i7-12800H, 2.40GHz, RAM: 32GB with Python 3.13 version.
\end{Rk}

\section{acknowledgements}
After this paper was written, Dr. Stijn Cambie let us know about the Erdos problem in Remark \ref{Erdos rmk}. The authors give sincere thanks to him.






\end{document}